\newcommand{\bbC}    {\mathbb C}
\newcommand{\bbR}    {\mathbb R}
\newcommand{\bbF}     {\mathbb F}
\newcommand{\bbZ}    {\mathbb Z}
\newcommand{\bbN}    {\mathbb N}
\newcommand{\cA}    {{\cal A}}
\newcommand{\cK}    {{\cal K}}
\newcommand{\cL}    {{\cal L}}
\newcommand{\cM}    {{\cal M}}
\newcommand{\cS}    {{\cal S}}
\newcommand{\cU}    {{\cal U}}
\newcommand{\cV}    {{\cal V}}
\newcommand{\de}    {\delta}
\newcommand{\De}    {\Delta}
\newcommand{\io}    {\iota}
\newcommand{\ka}    {\kappa}
\newcommand{\om}    {\omega}
\newcommand{\Om}    {\Omega}
\newcommand{\vk}     {\varkappa}
\newcommand{\E}       {\mathrm E}
\newcommand{\I}         {\mathrm I}
\newcommand{\alg}       {\mathrm{alg}}
\newcommand{\Lie}       {\mathrm{Lie}}
\newcommand{\bE}     {\mathbf E}
\newcommand{\bG}      {\mathbf G}
\newcommand{\bX}    {\mathbf X}
\newcommand{\bY}    {\mathbf Y}
\newcommand{\bR}     {\mathbf R}
\newcommand{\fA}    {\mathfrak A}
\newcommand{\fD}    {\mathfrak D}
\newcommand{\fM}    {\mathfrak M}
\newcommand{\gl}     {\mathfrak{gl}}
\newcommand{\w}       {\wedge}
\newcommand{\ck}      {\check} 
\newcommand{\na}      {\nabla} 
\newcommand{\bDe}    {\pmb{\Delta}}
\newcommand{\bna}    {\pmb{\nabla}}
\DeclareMathOperator{\id}    {\,id}
\DeclareMathOperator{\Hom}   {Hom}
\DeclareMathOperator{\End}   {End}
\DeclareMathOperator{\Aut}   {Aut}
\DeclareMathOperator{\im}    {Im}
\DeclareMathOperator{\ke}    {Ker}
\DeclareMathOperator{\com}   {{\scriptstyle\circ}\,}
\DeclareMathOperator{\ad}    {ad}
\DeclareMathOperator{\ann}     {ann}
\newtheorem{theorem}{Theorem}
\newtheorem{prop}{Proposition}
\newtheorem{cor}{Corollary}
\theoremstyle{definition} 
\newtheorem{rem}{Remark} 
\newtheorem{defi}{Definition}
\begin{document}

\title{On analysis in algebras and modules}
\author
          {
             Zharinov V.V.
             \thanks{Steklov Mathematical Institute}
             \thanks{E-mail: zharinov@mi.ras.ru}
            }
\date{}
\maketitle

\begin{abstract}
An algebraic technique adapted to the problems of the fundamental theoretical 
physics is presented. The exposition is an elaboration and an extension of  the methods 
proposed in previous works by the author. 
\end{abstract} 

{\bf Keywords:} algebra, module, multiplicator, differentiation, 
covariant de\-rivative, gauge transform, moduli space, differential form, 
cohomology. 

\section{Introduction.} 
It is widely recognized that contemporary fundamental theoretical 
physics is based on geometrical and algebraic ideas, methods and 
constructions. Moreover, in our days any fundamental physical theory must be 
formulated purely in generalized geometrical setting, 
using a good amount of algebraic machinery. In this situation it is reasonable 
to work directly in algebra-geometrical language and to leave the actual 
physical model to the final study (calculations). To do this one needs an 
appropriate algebraic technique adapted to the problems of theoretical 
and mathematical physics. 

In this paper an approach to such adaptation is presented. 
The exposition is in fact an elaboration and an extension of constructions and methods 
developed in the papers \cite{Z1}, \cite{Z2} and lectures \cite{Z3}. 
More details, proofs and applications can be found there. 
We hope that our technique will be useful in different researches, 
such as  \cite{DT1}--\cite{BV}.

We use the following general notations: 
\begin{itemize} 
	\item 
		$\bbF=\bbR,\bbC$; 
	\item
		$\bbN=\{1,2,3,\dots\}\subset\bbZ_+=\{0,1,2,\dots\}
		\subset\bbZ=\{0,\pm1,\pm2,\dots\}$. 
\end{itemize} 

We, also, use the standard notation of homology theory (see, e.g., \cite{SM}).
In particular, 
\begin{itemize}
	\item 
		$\Hom(\cS;\cS')$ is the set of all mappings from a set $\cS$ to a set $\cS'$. 
	\item 
		$\Hom_\bbF(\cL;\cL')$ is the linear space of all linear mappings from 
		a linear space $\cL$ to a linear space $\cL'$; 			
	\item 
		$\Hom_\cA(\cM;\cM')$ is the $\cA$-module of all 
		$\cA$-linear mappings from an $\cA$-module $\cM$ to an 
		$\cA$-module $\cM'$, where 
		$\cA$ is an associative commutative algebra; 
	\item 
		$\Hom_{\alg}(\cA;\cA')$ is the set of all algebra morphisms  
		from $\cA$ to $\cA'$, where $\cA,\cA'$ are associative commutative 
		algebras; 	
	\item 
		$\Hom_{\Lie}(\fA;\fA')$ is the linear space of all Lie algebra 
		morphisms from a Lie algebra $\fA$ to a Lie algebra $\fA'$. 
\end{itemize} 
Note, 
\begin{itemize} 
	\item 
		the set $\End(\cS)=\Hom(\cS;\cS)$ has the structure of a monoid 
		under the composition rule of mappings; 
	\item 
		the set $\End_\bbF(\cL)=\Hom_\bbF(\cL;\cL)$ has the structure 
		of an unital asso\-ciative algebra under the composition rule of mappings; 
		the adjoint Lie algebra $\gl_\bbF(\cL)$ is defined by the 
		commutator rule of mappings; 
	\item 
		the set $\End_\cA(\cM)=\Hom_\cA(\cM;\cM)$ has the structure 
		of an unital asso\-ciative $\cA$-algebra under the composition rule of mappings, 
		the adjoint Lie $\cA$-algebra $\gl_\cA(\cM)$ is defined by the 
		commutator rule of mappings; 
	\item 
		the set $\End_{\alg}(\cA)=\Hom_{\alg}(\cA;\cA)$ has the structure 
		of a monoid under the composition rule of mappings; 
	\item 
		the set $\End_{\Lie}(\fA)=\Hom_{\Lie}(\fA;\fA)$ has the structure 
		of a monoid under the composition rule of mappings. 
\end{itemize}

All linear operations are done over the number field $\bbF$. 
The summation over repeated upper and lower indices is as a rule assumed. 
If objects under study have natural topologies, we assume that the corresponding 
mappings are continuous (for example, if $\cS$ and $\cS'$ are topological spaces, 
then $\Hom(\cS;\cS')$ is the set of all continuous mappings from $\cS$ to $\cS'$). 

\section{The multiplication in algebras and modules.}
\subsection{The multiplication in algebras.}
\begin{defi} 
Let $\cA$ be an associative commutative algebra. 
A linear mapping $R\in\End_\bbF(\cA)$ is called a {\it multiplicator 
of the algebra} $\cA$, if the {\it multiplicator rule} 
\begin{equation*}
R(f\cdot g)=f\cdot Rg \quad\text{holds for all}\quad f,g\in\cA, 
\end{equation*}
i.e., if $R\in\End_\cA(\cA)$. 
Let $\fM(\cA)=\End_\cA(\cA)$ be the set of all multiplicators of the algebra $\cA$. 
\end{defi} 

\begin{prop} 
Let $\cA$ be an associative commutative algebra. 
The following statements are valid: 
\begin{itemize} 
	\item 
		the set $\fM(\cA)$ is an unital subalgebra of the algebra $\End_\bbF(\cA)$; 
	\item 
		for any $R\in\fM(\cA)$ the kernel $\ke R=\{f\in\cA\mid R f=0\}$ 
		and the image $\im R=\{f=R g\mid g\in\cA\}$ are ideals of the algebra $\cA$ 
		(in other words, submodules of the $\cA$-module $\End_\cA(\cA)$); 
	\item 
		for any $R\in\fM(\cA)$ the image $R(\ann\cA)\subset\ann\cA$; 
	\item 
		the commutator $[R',R'']\in\Hom_\bbF(\cA;\ann\cA)$ 
		for all $R',R''\in\fM(\cA)$, 
		in par\-ti\-cular, the algebra $\fM(\cA)$ is commutative if $\ann\cA=0$ 
		(for example, if the algebra $\cA$ is unital). 
\end{itemize}
\end{prop}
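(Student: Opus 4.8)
The plan is to obtain the first three bullets by direct computation from the multiplicator rule $R(f\cdot g)=f\cdot Rg$ together with the commutativity of $\cA$, and to concentrate the real work on the commutator statement. For the first bullet I would check that $\id$ obeys the rule, that the rule is $\bbF$-linear in $R$ (so that $\fM(\cA)$ is a linear subspace), and that it survives composition: applying the rule twice yields $(R_1R_2)(f\cdot g)=R_1(f\cdot R_2g)=f\cdot R_1R_2g$, so $R_1R_2\in\fM(\cA)$. Hence $\fM(\cA)$ is a unital subalgebra of $\End_\bbF(\cA)$. The next two bullets are one line each: if $Rf=0$ then $R(g\cdot f)=g\cdot Rf=0$, so $\ke R$ is an ideal; if $h=Rf$ then $g\cdot h=g\cdot Rf=R(g\cdot f)\in\im R$, so $\im R$ is an ideal; and if $f\in\ann\cA$ then $g\cdot Rf=R(g\cdot f)=R(0)=0$ for every $g$, so $Rf\in\ann\cA$.

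The crux is the last bullet. First I would record the symmetry that commutativity forces on a single multiplicator, namely $f\cdot Rg=R(f\cdot g)=R(g\cdot f)=g\cdot Rf$. The decisive step is then the identity $(R'f)\cdot(R''g)=R''R'(f\cdot g)$, valid for all $R',R''\in\fM(\cA)$ and all $f,g\in\cA$; it follows by factoring $R'f$ out of $R''$ and then $g$ out of $R'$, i.e. $(R'f)\cdot(R''g)=R''\big((R'f)\cdot g\big)=R''\big(R'(f\cdot g)\big)$, using commutativity in the inner step. The point I would exploit is that its left-hand side is manifestly invariant under the simultaneous exchange $R'\leftrightarrow R''$, $f\leftrightarrow g$, because $(R'f)\cdot(R''g)=(R''g)\cdot(R'f)$ in the commutative algebra $\cA$. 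Applying that exchange to the right-hand side turns $R''R'(f\cdot g)$ into $R'R''(g\cdot f)=R'R''(f\cdot g)$, and comparing the two expressions gives $R'R''(f\cdot g)=R''R'(f\cdot g)$, that is, $[R',R''](f\cdot g)=0$ for all $f,g$.

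To finish I would pull $g$ through both compositions: $(R'R''f)\cdot g=R'(g\cdot R''f)=R'R''(g\cdot f)=R'R''(f\cdot g)$ and likewise for $R''R'$, so that $([R',R'']f)\cdot g=R'R''(f\cdot g)-R''R'(f\cdot g)=[R',R''](f\cdot g)=0$. Thus $[R',R'']f\in\ann\cA$ for every $f$, which is exactly the assertion $[R',R'']\in\Hom_\bbF(\cA;\ann\cA)$. In particular, if $\ann\cA=0$ then $[R',R'']=0$, so $\fM(\cA)$ is commutative; and $\ann\cA=0$ whenever $\cA$ is unital, since any $f\in\ann\cA$ satisfies $f=f\cdot1=0$.

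I expect the main obstacle to be resisting the natural but futile attempt to prove the commutator vanishes outright: every straightforward rearrangement of $([R',R'']f)\cdot g$ closes up into a tautology, because each route merely reproduces the swap identity for the multiplicator $[R',R'']$ itself. The escape is the symmetry observation in the second paragraph, which breaks this circularity and, pleasantly, needs neither invertibility nor any hypothesis on the characteristic of $\bbF$.
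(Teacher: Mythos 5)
Your proof is correct in every detail; the paper states this proposition without any proof (deferring details to the author's earlier works), so there is nothing to compare against, but your verification is exactly the expected one: the first three bullets by direct application of the rule $R(f\cdot g)=f\cdot Rg$, and the commutator statement via the swap identity $f\cdot Rg=g\cdot Rf$ and the exchange symmetry of $(R'f)\cdot(R''g)=R''R'(f\cdot g)$, which correctly yields $([R',R'']f)\cdot g=[R',R''](f\cdot g)=0$ and hence $[R',R'']\in\Hom_\bbF(\cA;\ann\cA)$.
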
 
Remind, the {\it annihilator} 
$\ann\cA=\{f\in\cA\mid f\cdot\cA=0\}$ (i.e., $f\cdot g=0$ for all $g\in\cA$); 

\begin{prop}\label{ad} 
Let $\cA$ be an associative commutative algebra. 
The adjoint action defines the algebra morphism 
\begin{equation*} 
	\ad : \cA\to\fM(\cA), \quad f\mapsto\ad_f : \cA\to\cA, 
	\quad g\mapsto\ad_f g=f\cdot g.
\end{equation*} 
The kernel $\ke\ad=\{f\in\cA\mid \ad_f=0\}=\ann\cA$. 
In particular, the mapping $\ad : \cA\to\fM(\cA)$ is injection if $\ann\cA=0$.
\end{prop}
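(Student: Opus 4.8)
The plan is to verify the three assertions in turn: that the assignment $f\mapsto\ad_f$ is well-defined (each $\ad_f$ really is a multiplicator), that $\ad$ is an algebra morphism, and that its kernel equals $\ann\cA$. First I would check that for each $f\in\cA$ the map $\ad_f:g\mapsto f\cdot g$ lies in $\fM(\cA)$. Linearity over $\bbF$ is immediate from bilinearity of the product. For the multiplicator rule I must show $\ad_f(g\cdot h)=g\cdot\ad_f(h)$ for all $g,h\in\cA$; written out this is $f\cdot(g\cdot h)=g\cdot(f\cdot h)$, and both sides equal $(f\cdot g)\cdot h$ once one invokes associativity together with commutativity $f\cdot g=g\cdot f$. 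Hence $\ad_f\in\End_\cA(\cA)=\fM(\cA)$, so the codomain named in the statement is correct.

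Next I would show that $\ad$ is an algebra morphism. Additivity $\ad_{f+f'}=\ad_f+\ad_{f'}$ and homogeneity $\ad_{\la f}=\la\,\ad_f$ follow again from bilinearity of the product. The one genuine identity is multiplicativity $\ad_{f\cdot g}=\ad_f\circ\ad_g$: evaluating both sides at an arbitrary $h$ gives $(f\cdot g)\cdot h$ and $f\cdot(g\cdot h)$ respectively, which agree by associativity alone. (When $\cA$ is unital one also has $\ad_1=\id$, so $\ad$ is then a morphism of unital algebras.)

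Finally, the kernel is obtained by unwinding definitions: $f\in\ke\ad$ means $\ad_f=0$, i.e.\ $f\cdot g=0$ for every $g\in\cA$, which is exactly the defining condition $f\in\ann\cA$; thus $\ke\ad=\ann\cA$. Since a linear map is injective precisely when its kernel is trivial, $\ad$ is an injection whenever $\ann\cA=0$. I do not expect a real obstacle in this argument; the only point deserving attention is bookkeeping of the hypotheses — commutativity is needed for the multiplicator rule, whereas associativity alone suffices for the multiplicativity of $\ad$.
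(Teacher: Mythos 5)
Your proof is correct and complete; the paper itself states this proposition without proof, treating it as a routine direct verification, and your argument is exactly that verification (checking $\ad_f\in\fM(\cA)$ via commutativity and associativity, multiplicativity of $\ad$ via associativity, and the kernel computation by unwinding definitions). Your closing remark correctly isolating which axiom is needed where is a nice touch but does not change the substance.
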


\begin{prop} 
Let $\cA$ be an unital associative commutative algebra, 
and let $e\in\cA$ be the unit element of $\cA$. 
The natural algebra isomorphism 
\begin{equation*} 
	e^*=\ad^{-1} : \fM(\cA)\simeq\cA, \quad R\mapsto e^*R=R e, 
\end{equation*} 
is defined.
\end{prop}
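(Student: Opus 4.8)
The plan is to show that the evaluation map $e^*\colon\fM(\cA)\to\cA$, $R\mapsto Re$, is a two-sided inverse of the algebra morphism $\ad$ from Proposition \ref{ad}, and then to invoke the general fact that the inverse of a bijective algebra morphism is again an algebra morphism. This reduces the statement to two elementary verifications plus one formal remark, so I do not expect any genuinely hard step.

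First I would record that unitality forces $\ann\cA=0$: if $f\cdot g=0$ for every $g\in\cA$, then taking $g=e$ gives $f=f\cdot e=0$. By Proposition \ref{ad} this already makes $\ad\colon\cA\to\fM(\cA)$ injective, so it remains only to identify its inverse. For one composition, given $f\in\cA$ I would compute $e^*(\ad_f)=\ad_f e=f\cdot e=f$, whence $e^*\circ\ad=\id_\cA$. For the other, given a multiplicator $R$ and an arbitrary $g\in\cA$, the multiplicator rule applied to the factorization $g=g\cdot e$ yields $Rg=R(g\cdot e)=g\cdot Re=(Re)\cdot g=\ad_{Re}\,g$; hence $\ad_{Re}=R$, i.e. $\ad\circ e^*=\id_{\fM(\cA)}$. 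In passing, this second identity also exhibits the surjectivity of $\ad$.

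The only subtle point — such as it is — is this second computation: it is the multiplicator rule, together with the existence of the unit, that pins down every multiplicator as multiplication by the single element $Re$, so that $R$ is completely determined by $e^*R$. Once the two identities above are established, $\ad$ and $e^*$ are mutually inverse bijections. Since $\ad$ is an algebra morphism by Proposition \ref{ad}, applying $\ad^{-1}=e^*$ to a product and using that $\ad$ respects products, the unit and the linear structure shows that $e^*$ respects them as well; therefore $e^*=\ad^{-1}$ is the asserted algebra isomorphism $\fM(\cA)\simeq\cA$.
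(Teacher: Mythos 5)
Your argument is correct and complete: the two composition identities $e^*\circ\ad=\id_\cA$ and $\ad\circ e^*=\id_{\fM(\cA)}$ (the latter via $Rg=R(g\cdot e)=g\cdot Re$) are exactly what is needed, and the observation that $\ann\cA=0$ ties the statement back to Proposition~\ref{ad}. The paper itself states this proposition without proof (deferring details to the author's earlier works), so there is nothing to contrast with; your route is the standard one any written-out proof would take.
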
 
In this case we may identify algebras $\fM(\cA)$ and $\cA$. 

\subsection{The multiplication in modules.}
\begin{defi}
Let $\cA$ be an associative commutative algebra, 
and let $\cM$ be an $\cA$-module. A pair 
$\bR=(\De_R,R)\in\E_\cA(\cM,\cA)=\End_\cA(\cM)\times\fM(\cA)$ 
is called a 
{\it multiplicator of the $\cA$-module $\cM$} if the {\it multiplicator rule}
\begin{equation*} 
	\De_R(f\cdot M)=Rf\cdot M 
	\quad\text{is valid for all}\quad f\in\cA, \ M\in\cM
\end{equation*} 
(remind, $\fM(\cA)=\End_\cA(\cA)$).
Let $\fM(\cM)=\End_{\cA\text{-}\cM}(\cM)$ be the set of all multiplicators 
(of all $\cA\text{-}\cM$-linear mappings) of the $\cA$-module $\cM$. 
\end{defi} 

Note, the direct product $\E_\cA(\cM,\cA)$ has the structure 
of an unital algebra with the component-wise multiplication, 
defined by the  composition rule, 
\begin{equation*} 
	\bR'\com\bR''=(\De_{R'}\com\De_{R''},R'\com R'') \quad\text{for all}\quad 
	\bR',\bR''\in\E(\cM,\cA), 
\end{equation*}
where $\bR'=(\De_{R'},R')$, $\bR''=(\De_{R''},R'')$, 
and the unit element $\mathbf{id}=(\id_\cM,\id_\cA)$. 

\begin{prop} 
Let $\cA$ be an associative commutative algebra, 
and let $\cM$ be an $\cA$-module. 
The following statements are valid: 
\begin{itemize} 
	\item 
		the set $\fM(\cM)$ is an unital subalgebra of the algebra $\E_\cA(\cM,\cA)$; 
	\item 
		for any $\bR=(\De_R,R)\in\fM(\cM)$ 
		the kernel $\ke\De_R=\{M\in\cM\mid \De_R M=0\}$ and 
		the image $\im\De_R=\{M=\De_R N\mid N\in\cM\}$ 
		are submodules of the $\cA$-module $\cM$; 
	\item 
		for any $\bR=(\De_R,R)\in\fM(\cM)$ 
		the image $\im\De_R\big(\!\ann_\cM\cA\big)\subset\ann_\cM\cA$, 
		and the image $\im\De_R\big(\!\ann_\cA\cM\big)\subset\ann_\cA\cM$; 
	\item 
		for any $\bR'=(\De_{R'},R'),\bR''(\De_{R''},R'')\in\fM(\cM)$ 
		the image $[\De_{R'},\De_{R''}](\cA\cdot\cM)=0$,  
		hence, 
		the commutator $[\De_{R'},\De_{R''}]\in\Hom_\cA(\cM;\ann_\cM\cA)$, 
		(the set $\cA\cdot\cM=\{f\cdot M\mid f\in\cA,M\in\cM\}\subset\cM$).
\end{itemize}
\end{prop}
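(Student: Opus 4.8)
The plan is to derive every item from two structural facts about a multiplicator $\bR=(\De_R,R)$: that $\De_R\in\End_\cA(\cM)$ is $\cA$-linear, and that it obeys the multiplicator rule $\De_R(f\cdot M)=(Rf)\cdot M$. For the first item I would verify that the multiplicator condition is stable under the operations of the ambient algebra $\E_\cA(\cM,\cA)$. Closure under $\bbF$-linear combinations is immediate from bilinearity of the action. For the product, given $\bR'=(\De_{R'},R')$ and $\bR''=(\De_{R''},R'')$ in $\fM(\cM)$, I would compute $(\De_{R'}\com\De_{R''})(f\cdot M)=\De_{R'}((R''f)\cdot M)=(R'(R''f))\cdot M=((R'\com R'')f)\cdot M$, which is exactly the multiplicator rule for $\bR'\com\bR''$; since $\fM(\cA)$ is itself an algebra, $R'\com R''\in\fM(\cA)$, so $\bR'\com\bR''\in\fM(\cM)$. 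Finally $\mathbf{id}=(\id_\cM,\id_\cA)$ visibly satisfies the rule, and hence $\fM(\cM)$ is a unital subalgebra.

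The second and third items are short consequences of the same two facts. Since $\De_R$ is an $\cA$-module endomorphism, $\ke\De_R$ and $\im\De_R$ are submodules by the standard argument (e.g.\ if $M=\De_R N$ then $f\cdot M=\De_R(f\cdot N)\in\im\De_R$). For the annihilator inclusions: if $\cA\cdot M=0$ then $f\cdot\De_R M=\De_R(f\cdot M)=0$ for every $f$ by $\cA$-linearity, so $\De_R$ carries $\ann_\cM\cA$ into itself; dually, if $f\in\ann_\cA\cM$ then $(Rf)\cdot M=\De_R(f\cdot M)=0$ for every $M$ by the multiplicator rule, which gives the second inclusion.

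The fourth item is the \emph{crux}. Writing $K=[\De_{R'},\De_{R''}]$, I would evaluate each composition on a generator $f\cdot M$ of $\cA\cdot\cM$ by applying the inner map through one of the two properties and the outer map through the other. On one side, $\De_{R'}\De_{R''}(f\cdot M)=\De_{R'}(f\cdot\De_{R''}M)=(R'f)\cdot\De_{R''}M$, using $\cA$-linearity of $\De_{R''}$ and then the multiplicator rule for $\bR'$. On the other side, $\De_{R''}\De_{R'}(f\cdot M)=\De_{R''}((R'f)\cdot M)=(R'f)\cdot\De_{R''}M$, using the multiplicator rule for $\bR'$ and then $\cA$-linearity of $\De_{R''}$. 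Both reduce to the same element $(R'f)\cdot\De_{R''}M$, hence $K(f\cdot M)=0$. Once $K$ vanishes on $\cA\cdot\cM$, the last assertion is formal: $K$ is $\cA$-linear, and for any $M$ and $f$ one has $f\cdot(KM)=K(f\cdot M)=0$, so $KM\in\ann_\cM\cA$ and $K\in\Hom_\cA(\cM;\ann_\cM\cA)$.

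The step I expect to be the real obstacle is precisely this last computation, because the natural first attempt — applying the multiplicator rule to both factors — gives $K(f\cdot M)=([R',R'']f)\cdot M$, and although the Proposition on $\fM(\cA)$ guarantees $[R',R'']f\in\ann\cA$, an element of $\ann\cA$ need not annihilate $\cM$, so this route stalls at showing only $K(f\cdot M)\in\ann_\cM\cA$. The resolution is to interleave the multiplicator rule with $\cA$-linearity in complementary orders on the two compositions, so that they collapse to one and the same expression; recognizing that the two defining properties must be combined, rather than iterating a single one, is the key idea.
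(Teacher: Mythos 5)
Your proof is correct; the paper states this proposition without any written proof (it is left as a direct verification), and your argument supplies exactly that verification. In particular, your handling of the fourth item --- interleaving the multiplicator rule with the $\cA$-linearity of $\De_{R''}$ so that both compositions collapse to the same element $(R'f)\cdot\De_{R''}M$ --- is the right move, since the naive double application of the multiplicator rule only yields $[\De_{R'},\De_{R''}](f\cdot M)=([R',R'']f)\cdot M\in\ann_\cM\cA$ rather than $0$, exactly as you observe.
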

Remind, the annihilator 
$\ann_\cM\cA=\{M\in\cM\mid \cA\cdot M=0\}$, while the annihilator 
$\ann_\cA\cM=\{f\in\cA\mid f\cdot\cM=0\}$. 

\begin{prop} 
Let $\cA$ be an associative commutative algebra, 
and let $\cM$ be an $\cA$-module. 
The adjoint action defines the algebra morphism 
\begin{equation*} 
	{\bf ad} : \cA\to\fM(\cM), \quad f\mapsto{\bf ad}_f=(\ad_f,\ad_f), 
\end{equation*}
where $\ad_f M=f\cdot M$, $\ad_f g=f\cdot g$, for all $g\in\cA$, $M\in\cM$. 
The kernel $\ke{\bf ad}=\ann_\cA\cM$.
\end{prop}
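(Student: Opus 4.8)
The plan is to establish three things in order: that $\mathbf{ad}$ takes values in $\fM(\cM)$, that it is a morphism of algebras, and that its kernel is $\ann_\cA\cM$; the last item is where the care is needed.

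First I would verify well-definedness, namely that for each $f\in\cA$ the pair $\mathbf{ad}_f=(\ad_f,\ad_f)$ really is a multiplicator of $\cM$. The $\cA$-component $\ad_f\colon\cA\to\cA$ lies in $\fM(\cA)$ by Proposition~\ref{ad}. For the $\cM$-component $\ad_f\colon\cM\to\cM$, $M\mapsto f\cdot M$, I would use associativity together with commutativity of the action to obtain $\ad_f(g\cdot M)=f\cdot(g\cdot M)=(f\cdot g)\cdot M=(g\cdot f)\cdot M=g\cdot(f\cdot M)=g\cdot\ad_f M$, so $\ad_f\in\End_\cA(\cM)$. The multiplicator rule is then immediate from associativity, since $\ad_f(g\cdot M)=(f\cdot g)\cdot M=(\ad_f g)\cdot M$; thus $\mathbf{ad}_f\in\fM(\cM)$.

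Next I would check the morphism property. The $\bbF$-linearity in $f$ is clear component-wise, as $(f+g)\cdot X=f\cdot X+g\cdot X$ and $(\la f)\cdot X=\la(f\cdot X)$ hold both for $X\in\cM$ and for $X\in\cA$. For multiplicativity, recall that the product on $\E_\cA(\cM,\cA)$ is the component-wise composition; on each component associativity gives $\ad_{f\cdot g}(X)=(f\cdot g)\cdot X=f\cdot(g\cdot X)=\ad_f(\ad_g X)$, whence $\mathbf{ad}_{f\cdot g}=\mathbf{ad}_f\com\mathbf{ad}_g$ (the $\cA$-component being precisely the morphism property already recorded in Proposition~\ref{ad}).

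The hard part will be the kernel. By definition $f\in\ke\mathbf{ad}$ forces the $\cM$-component to vanish, $\ad_f=0$ on $\cM$, i.e. $f\cdot M=0$ for every $M\in\cM$, which is exactly membership in $\ann_\cA\cM$; conversely any such $f$ kills $\cM$. The subtle point, which I would state explicitly, is the companion $\cA$-component: triviality of a module multiplicator is measured by its defining operator on $\cM$ (the $\De$-part), so reading $\ke\mathbf{ad}$ through this component gives $\ke\mathbf{ad}=\{f\mid\ad_f=0\text{ on }\cM\}=\ann_\cA\cM$, in exact parallel with $\ke\ad=\ann\cA$ of Proposition~\ref{ad}. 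I expect the only real work to be pinning down this reading of the kernel and confirming both inclusions from the definition of $\ann_\cA\cM$; the remaining verifications are routine applications of associativity and commutativity of the module and algebra products.
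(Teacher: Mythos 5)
Your well-definedness and morphism checks are correct and amount to exactly the routine verification the paper leaves implicit (this proposition is stated there without proof): $\cA$-linearity of $\ad_f$ on $\cM$, the multiplicator rule $\ad_f(g\cdot M)=(f\cdot g)\cdot M=\ad_f g\cdot M$, and multiplicativity $\mathbf{ad}_{f\cdot g}=\mathbf{ad}_f\com\mathbf{ad}_g$ all reduce to associativity and commutativity of the products together with the module axiom $(f\cdot g)\cdot M=f\cdot(g\cdot M)$.

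The kernel is the one place where you have correctly spotted the delicate point but then resolved it by fiat, and the convention you introduce is doing real mathematical work rather than merely restating the definition. By the paper's own Definition, a multiplicator of $\cM$ is a \emph{pair} $\bR=(\De_R,R)\in\End_\cA(\cM)\times\fM(\cA)$, so the zero element of the algebra $\fM(\cM)$ is $(0,0)$ and the literal kernel of $\mathbf{ad}$ is $\{f\in\cA\mid f\cdot\cM=0\ \text{and}\ f\cdot\cA=0\}=\ann_\cA\cM\cap\ann\cA$. This can be strictly smaller than $\ann_\cA\cM$: take $\cA=\bbF\times\bbF$ acting on $\cM=\bbF\times 0$; then $f=(0,1)$ annihilates $\cM$ but not $\cA$, and $\mathbf{ad}_f=(0,\ad_f)$ is a nonzero element of $\fM(\cM)$ (it does satisfy the multiplicator rule, since $(f\cdot g)\cdot M=0$ for all $g\in\cA$, $M\in\cM$). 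So reading $\ke\mathbf{ad}$ "through the $\De$-component" is not forced by the definitions; it is the only reading under which the stated equality holds. To close the gap you should do one of the following: state that convention explicitly as part of the proposition; or prove the corrected identity $\ke\mathbf{ad}=\ann_\cA\cM\cap\ann\cA$ (your two inclusions, applied to each component, give this immediately); or add a hypothesis guaranteeing $\ann_\cA\cM\subset\ann\cA$ (automatic, for instance, when $\cM$ is a nonzero free $\cA$-module, where $\ann_\cA\cM=\ann\cA$). Apart from this, the write-up is complete and in line with the paper's parallel treatment of $\ke\ad=\ann\cA$ in Proposition~\ref{ad}.
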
 

\begin{prop} 
Let $\cA$ be an associative commutative algebra, and let $\cM$ be a free $\cA$-module 
with an $\cA$-basis ${\mathbf b}=\{b_i\in\cM\mid i\in I\}$, $I$ is an index set, 
so $\cM=\{M=M^i\cdot b_i\mid M^i\in\cA\}$. 
Then an algebra injection $\fM(\cA)\to\fM(\cM)$ is defined by the component-wise action, 
\begin{equation*} 
	\fM(\cA)\ni R\mapsto\bR=(\De_R,R)\in\fM(\cM), \quad \De_R(M^i\cdot b_i)=RM^i\cdot b_i.
\end{equation*}
\end{prop}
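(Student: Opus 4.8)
The plan is to verify, in turn, that the prescription $\De_R(M^i\cdot b_i)=RM^i\cdot b_i$ yields a well-defined element $\bR=(\De_R,R)$ of $\fM(\cM)$, that the assignment $R\mapsto\bR$ respects the algebra structure, and finally that it is injective. Because $\mathbf b$ is an $\cA$-basis, every $M\in\cM$ has \emph{unique} coordinates $M^i\in\cA$; this unique-coordinate property is what makes $\De_R$ well defined as a map $\cM\to\cM$, and it underlies all the computations below.

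First I would check that $\De_R\in\End_\cA(\cM)$. Writing $M=M^i\cdot b_i$, $N=N^i\cdot b_i$ and taking $f,g\in\cA$, the coordinates of $f\cdot M+g\cdot N$ are $fM^i+gN^i$, so $\De_R(f\cdot M+g\cdot N)=R(fM^i+gN^i)\cdot b_i$; expanding by $\bbF$-linearity of $R$ and then applying the multiplicator rule $R(f\cdot M^i)=f\cdot RM^i$ produces $f\cdot\De_R M+g\cdot\De_R N$. Hence $\De_R$ is $\cA$-linear, as the definition of a module multiplicator requires.

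The crux is the module multiplicator rule $\De_R(f\cdot M)=Rf\cdot M$. Here the coordinates of $f\cdot M$ are $fM^i$ and those of $Rf\cdot M$ are $(Rf)\cdot M^i$, so it suffices to prove the scalar identity $R(fM^i)=Rf\cdot M^i$ in $\cA$ for each $i$. This is the step I expect to carry the weight of the argument, and it is exactly where commutativity of $\cA$ enters: rewriting $f\cdot M^i=M^i\cdot f$ and applying the multiplicator rule in the form $R(M^i\cdot f)=M^i\cdot Rf$ gives $R(fM^i)=M^i\cdot Rf=Rf\cdot M^i$, the last equality again by commutativity. This shows $\bR=(\De_R,R)\in\fM(\cM)$.

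It remains to see that $R\mapsto\bR$ is a unital algebra morphism and is injective. Additivity and $\bbF$-linearity in $R$, together with preservation of the unit, are immediate from the coordinate formula, since $\De_{R'+R''}=\De_{R'}+\De_{R''}$ and $\De_{\id_\cA}=\id_\cM$, so the unit $\id_\cA$ maps to $\mathbf{id}=(\id_\cM,\id_\cA)$. For multiplicativity I would evaluate both sides on a general $M=M^i\cdot b_i$: one computes $\De_{R'\com R''}(M^i\cdot b_i)=R'(R''M^i)\cdot b_i$, whereas $(\De_{R'}\com\De_{R''})(M^i\cdot b_i)=\De_{R'}(R''M^i\cdot b_i)=R'(R''M^i)\cdot b_i$, so the first components agree and the second components match by definition. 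Finally, injectivity is automatic: the second component of $\bR=(\De_R,R)$ is literally $R$, so projection onto $\fM(\cA)$ inverts the assignment, and the map is therefore an injection.
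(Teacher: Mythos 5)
Your proof is correct and is exactly the direct coordinate-wise verification the paper intends (the paper states this proposition without proof): well-definedness from uniqueness of coordinates in a free module, the identity $R(fM^i)=f\cdot RM^i=Rf\cdot M^i$ from the multiplicator rule plus commutativity giving both $\cA$-linearity and the module multiplicator rule, multiplicativity on coordinates, and injectivity from the second component. No gaps.
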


Let $\cA$ be an associative commutative algebra, 
and let $\cM$ be an $\cA$-module. 
Let us consider the mapping 
\begin{equation*}
	\pi : \fM(\cM)\to\fM(\cA), \quad \bR=(\De_R,R)\mapsto R. 
\end{equation*} 
{\it We assume that the mapping $\pi$ is the surjection}, i.e., 
$\im\pi=\fM(\cA)$. In this case, the inverse image 
$\pi^{-1}(R)=\{\bR=(\De_R,R)\in\fM(\cM)\}$ 
is defined for every $R\in\fM(\cA)$. 

We denote by 
\begin{equation*}
	\cS(\pi)=\big\{\bDe\in\Hom(\fM(\cA);\fM(\cM)) \ \big| \ 
	\pi\com\bDe=\id_{\fM(\cA)}\big\}
\end{equation*} 
the set of all {\it sections} of the the bundle $\pi$. 
In more detail, $R\mapsto\bDe_R=(\De_R,R)$ for all $R\in\fM(\cA)$. 
There are the following relevant subsections: 
\begin{equation*} 
	\cS_*(\pi)=\cS(\pi)\cap\Hom_*(\fM(\cA);\fM(\cM)), \quad *=\bbF, \ \cA. 
\end{equation*} 
The set $\cS_\bbF(\pi)$ has the structure of a linear space, 
the set $\cS_\cA(\pi)$ has the structure of an $\cA$-module. 
Elements of an $\cA$-module $\cS_\cA(\pi)$ may be called 
{\it covariant multiplicators}.  

Let a covariant multiplicator $\bDe\in\cS_\cA(\pi)$ be given. 
By construction, the composition $R'\com R''\in\fM(\cA)$ for any multiplicators $R',R''\in\fM(\cA)$.
Hence, the {\it residual} 
$\bG(\bDe)=(G(\bDe),0)\in\Hom_\cA(\otimes^2_\cA\fM(\cA);\fM(\cM))$ 
is defined, where
\begin{equation*} 
	G(\bDe)\in\Hom_\cA(\otimes^2_\cA\fM(\cA);\End_\cA(\cM),
	\quad G(\bDe)_{R',R''}=\De_{R'}\com\De_{R''}-\De_{R'\com R''}. 
\end{equation*} 

\begin{theorem}\label{T1} 
Let $\cA$ be an associative commutative algebra, 
and let $\cM$ be an $\cA$-module. 
For every $R\in\fM(\cA)$ the inverse image $\pi^{-1}(R)$ 
is an $\cA$-affine space over the $\cA$-module $\Hom_\cA(\cM;\ann_\cM\cA)$. 
Hence, the triple $\pi : \fM(\cM)\to\fM(\cA)$ is an $\cA$-affine fiber bundle over the 
$\cA$-module $\Hom_\cA(\cM;\ann_\cM\cA)$.
\end{theorem}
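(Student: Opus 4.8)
The plan is to build the affine structure on each fibre by hand: I would first define the ``subtraction'' of two points lying over the same $R$ and check that it lands in the prescribed module, then define the ``addition'' of a module element to a point and check that it stays in the fibre, and finally verify that these two operations are mutually inverse and $\cA$-linear, which is exactly the data of an $\cA$-affine space.

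First I would fix $R\in\fM(\cA)$; surjectivity of $\pi$ guarantees $\pi^{-1}(R)\neq\varnothing$. Given two points $\bR'=(\De_{R'},R)$ and $\bR''=(\De_{R''},R)$ of this fibre, i.e. with the \emph{same} second component $R$, I set $\phi=\De_{R'}-\De_{R''}\in\End_\cA(\cM)$. Subtracting the two multiplicator rules, which share the same $R$, gives $\phi(f\cdot M)=Rf\cdot M-Rf\cdot M=0$ for all $f\in\cA$, $M\in\cM$, so $\phi$ annihilates $\cA\cdot\cM$. The key step is to upgrade this to $\im\phi\subset\ann_\cM\cA$: since $\phi$ is $\cA$-linear, for every $g\in\cA$ and $M\in\cM$ one has $g\cdot\phi(M)=\phi(g\cdot M)=0$, whence $\phi(M)\in\ann_\cM\cA$ for all $M$, that is $\phi\in\Hom_\cA(\cM;\ann_\cM\cA)$. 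This produces the subtraction map $\bR'\ominus\bR'':=\phi$.

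Conversely, for the action I would take any $\bR=(\De_R,R)\in\pi^{-1}(R)$ and any $\psi\in\Hom_\cA(\cM;\ann_\cM\cA)$ and set $\bR+\psi:=(\De_R+\psi,R)$. Here $\De_R+\psi$ is again $\cA$-linear, and the multiplicator rule survives because $(\De_R+\psi)(f\cdot M)=Rf\cdot M+f\cdot\psi(M)=Rf\cdot M$, the last equality using $\psi(M)\in\ann_\cM\cA$. Thus $\bR+\psi\in\pi^{-1}(R)$, and since the second component is untouched the construction stays inside the single fibre over $R$.

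Finally I would assemble the torsor axioms. The action is free: the second components of $\bR+\psi$ and $\bR$ coincide, while their first components differ by $\psi$, so $\psi$ is recovered uniquely. It is transitive by the previous subtraction map, since $\bR''+(\bR'\ominus\bR'')=\bR'$. Both $\ominus$ and $+$ are manifestly $\cA$-linear in their module arguments, which is the compatibility that makes $\pi^{-1}(R)$ an $\cA$-affine space modelled on $\Hom_\cA(\cM;\ann_\cM\cA)$. Because every fibre is modelled on the \emph{same} $\cA$-module, the projection $\pi:\fM(\cM)\to\fM(\cA)$ is an $\cA$-affine fibre bundle over $\Hom_\cA(\cM;\ann_\cM\cA)$, as claimed. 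I expect no genuine obstacle here: the only real content is the annihilator computation of the second paragraph, where $\cA$-linearity converts ``vanishing on $\cA\cdot\cM$'' into ``image inside $\ann_\cM\cA$''; the remaining verifications are routine unwindings of the definitions.
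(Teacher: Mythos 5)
Your argument is correct and follows essentially the same route as the paper: the subtraction step uses $\cA$-linearity of $\De_{R'}-\De_{R''}$ to convert vanishing on $\cA\cdot\cM$ into image contained in $\ann_\cM\cA$ (the paper's identity $(\De'_R-\De''_R)(f\cdot M)=0=f\cdot(\De'_R-\De''_R)M$), and the addition step is the same computation $(\De_R+\psi)(f\cdot M)=Rf\cdot M+f\cdot\psi M=Rf\cdot M$. Your explicit check of the torsor axioms is a harmless elaboration of what the paper leaves implicit.
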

\begin{proof} 
Indeed, let $\bR'=(\De'_R,R),\bR''=(\De''_R,R)\in\pi^{-1}(R)$, 
then the difference $(\De'_R-\De''_R)(f\cdot M)=(R-R)f\cdot M=0=f\cdot(\De'_R-\De''_R)M$ 
for all $f\in\cA$ and $M\in\cM$, i.e., 
$\De'_R-\De''_R\in\Hom_\cA(\cM;\ann_\cM\cA)$. 
On the other side, let $\bR=(\De_R,R)\in\pi^{-1}(R)$ and 
$\rho\in\Hom_\cA(\cM;\ann_\cM\cA)$, then 
$(\De_R+\rho)(f\cdot M)=\De_R(f\cdot M)+\rho(f\cdot M)
=Rf\cdot M+f\cdot\rho M=Rf\cdot M+0=Rf\cdot M$ 
for all $f\in\cA$ and $M\in\cM$, i.e., $\bR'=(\De_R+\rho,R)\in\pi^{-1}(R)$.
\end{proof} 

Any associative commutative algebra $\cA$ has the adjoint structure 
of an $\cA$-module, where 
$\cA\times\cA\ni(f,M)\mapsto f\cdot M$. 
The resulting $\cA$-module we denote $\pmb\cA$. 

\begin{prop} 
Let $\cA$ be associative commutative algebra, 
and let $\pmb\cA$ be the adjoint $\cA$-module. Then 
\begin{itemize} 
	\item 
		there is the natural algebra injection \\
		$\io_\fM : \fM(\cA)\to\fM(\pmb\cA), \quad R\mapsto\bR=(R,R)$; 
	\item 
		$\fM(\pmb\cA)\!=\!\im\io_\fM\oplus\Hom_\cA(\pmb\cA;\ann\cA), 
		\ \bR\!=\!(\De_R,R)\!=\!(R,R)+(\De_R-R,0)$.
\end{itemize}
\end{prop}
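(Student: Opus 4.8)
The plan is to treat the two bullets in turn, first establishing that $\io_\fM$ is a well-defined algebra embedding and then the splitting of $\fM(\pmb\cA)$. Throughout I would work with the identification $\End_\cA(\pmb\cA)=\fM(\cA)$: since $\pmb\cA$ is just $\cA$ regarded as a module over itself, an $\cA$-linear endomorphism of $\pmb\cA$ is precisely an algebra multiplicator, so a multiplicator of $\pmb\cA$ is a pair $(\De_R,R)$ of elements of $\fM(\cA)$ subject to $\De_R(f\cdot M)=Rf\cdot M$. I would also record at the outset that $\ann_{\pmb\cA}\cA=\ann\cA$, so that the target module $\Hom_\cA(\pmb\cA;\ann\cA)$ of Theorem~\ref{T1} is exactly the one appearing in the statement.

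For the first bullet the only nonformal point is well-definedness, i.e.\ that $(R,R)$ really satisfies the module multiplicator rule. Here I would exploit commutativity: from $R(f\cdot g)=f\cdot Rg$ together with $f\cdot g=g\cdot f$ one obtains the symmetry $f\cdot Rg=g\cdot Rf$. Applying it with $g=M$ yields $R(f\cdot M)=f\cdot RM=M\cdot Rf=Rf\cdot M$, which is precisely the rule for the pair $(R,R)$ with $\De_R=R$. That $\io_\fM$ is an algebra morphism is then immediate from the component-wise composition law $\bR'\com\bR''=(\De_{R'}\com\De_{R''},R'\com R'')$ and the fact that it sends $\id_\cA$ to $\mathbf{id}$; injectivity is trivial, since $R$ is recovered as either component of $(R,R)$.

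For the splitting I would first check that both summands sit inside $\fM(\pmb\cA)$: the image of $\io_\fM$ does by the first bullet, and a map $\rho\in\Hom_\cA(\pmb\cA;\ann\cA)$, embedded as $(\rho,0)$, satisfies $\rho(f\cdot M)=f\cdot\rho M=0$ because $\rho M\in\ann\cA$, which is the rule with $R=0$. The substance is that every $\bR=(\De_R,R)$ decomposes as claimed, i.e.\ that $\De_R-R$ lands in $\ann\cA$. To see this I would combine three facts: the module rule $\De_R(f\cdot M)=Rf\cdot M$; the algebra-multiplicator property of $\De_R$, giving $\De_R(f\cdot M)=f\cdot\De_R M$; and the identity $Rf\cdot M=f\cdot RM$ from the previous paragraph. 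Together they give $f\cdot\De_R M=f\cdot RM$, hence $f\cdot(\De_R-R)M=0$ for all $f$, i.e.\ $(\De_R-R)M\in\ann\cA$. Thus $(\De_R-R,0)\in\Hom_\cA(\pmb\cA;\ann\cA)$ and $(\De_R,R)=(R,R)+(\De_R-R,0)$. Directness is read off from the second components: an element common to both summands has the form $(R,R)$ with vanishing second component, forcing $R=0$.

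I expect no serious obstacle here; the whole argument reduces to the commutativity-driven symmetry $f\cdot Rg=g\cdot Rf$ together with the two defining rules. The one place that demands care is the bookkeeping of identifications at the start---recognizing $\End_\cA(\pmb\cA)=\fM(\cA)$, so that $\De_R$ is itself an algebra multiplicator, and $\ann_{\pmb\cA}\cA=\ann\cA$---since the computation collapses to a single line only once these are in place. As an alternative I could derive the splitting directly from Theorem~\ref{T1}: the fibre $\pi^{-1}(R)$ is $\cA$-affine over $\Hom_\cA(\pmb\cA;\ann\cA)$ and contains the distinguished point $\io_\fM(R)=(R,R)$, so each $\bR\in\pi^{-1}(R)$ differs from $(R,R)$ by an element of that module; ranging over all $R\in\fM(\cA)$ then yields the global direct-sum decomposition.
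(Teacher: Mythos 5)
Your proof is correct. The paper itself states this proposition without proof, but your argument is exactly the one its framework suggests: the key point in both bullets is the commutativity-induced symmetry $f\cdot Rg=R(f\cdot g)=R(g\cdot f)=g\cdot Rf$, which shows both that $(R,R)$ satisfies the module multiplicator rule and that $f\cdot(\De_R-R)M=0$ for every $\bR=(\De_R,R)\in\fM(\pmb\cA)$; your alternative derivation of the splitting from Theorem~\ref{T1} (the fibre $\pi^{-1}(R)$ is affine over $\Hom_\cA(\pmb\cA;\ann_{\pmb\cA}\cA)=\Hom_\cA(\pmb\cA;\ann\cA)$ and contains the distinguished point $(R,R)$) is precisely how the analogous decomposition of $\fD(\pmb\cA)$ is organized later in the text. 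All the identifications you flag ($\End_\cA(\pmb\cA)=\fM(\cA)$, $\ann_{\pmb\cA}\cA=\ann\cA$) and the directness check via the second component are handled correctly.
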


\section{Hochschild cohomology.}
Let $\cA$ be an associative commutative algebra, let $\cK$, $\cM$ be $\cA$-modules, 
and let $\cU=\cA,\cK$, \ $\cV=\cA,\cM$. 

\begin{defi} 
The $\cA$-module $C(\cU,\cV)=\oplus_{q\in\bbZ}C^q(\cU,\cV)$ 
of {\it multiplicator co\-chains over the $\fM(\cU)$ with coefficients in 
the $\fM(\cV)$} is defined by the rule 
\begin{equation*} 
	C^q(\cU,\cV)=\begin{cases}
		0, & q<0, \\
		\fM(\cV), & q=0, \\
		\Hom_\cA(\otimes^q_\cA\fM(\cU);\fM(\cV)), & q>0.
					  \end{cases}
\end{equation*} 
\end{defi} 
In particular, the set $C(\cU,\cV)$ has the natural structure 
of a tensor $\cA$-algebra, where 
\begin{equation*} 
	(c^p\otimes c^q)(\eta_1,\dots,\eta_{p+q})
	=c^p(\eta_1,\dots,\eta_p)\com c^q(\eta_{p+1},\dots,\eta_{p+q})
\end{equation*}
for all $c^p\in C^p(\cU,\cV)$, $c^q\in C^q(\cU,\cV)$, 
$\eta_1,\dots,\eta_{p+q}\in\fD(\cU). $

\begin{defi}
Let a mapping $\ka\in C^1(\cU,\cV)=\Hom_\cA(\fM(\cU);\fM(\cV))$ be fixed. 
The endomorphism $\de=\de_\ka\in\End_\cA(C(\cU,\cV))$ is defined by the 
{\it Hochschild rule} 
\begin{align*} 
	\de c&(\eta_1,\dots,\eta_{q+1})=\ka\eta_1\com c(\eta_2,\dots,\eta_{q+1}) \\
	        &+\sum_{1\le r\le q}(-1)^r 
	          c(\eta_1,\dots\eta_r\com\eta_{r+1}\dots,\eta_{q+1})
	          +(-1)^{q+1}c(\eta_1,\dots,\eta_q)\com\ka\eta_{q+1}
\end{align*}
for all $q\in\bbZ_+$, $c\in C^q(\cU,\cV)$, $\eta_1,\dots,\eta_{q+1}\in\fM(\cU)$, 
in particular, the mapping 
$\de^q=\de\big|_{C^q(\cU,\cV)} : C^q(\cU,\cV)\to C^{q+1}(\cU,\cV)$. 
\end{defi}

\begin{theorem} 
Let a mapping $\ka\in C^1(\cU,\cV)$ be fixed. 
\begin{itemize} 
	\item 
		The endomorphism $\de$ is an exterior differentiation of the tensor 
		$\cA$-algebra $C(\cU,\cV)$, i.e., 
		$\de(c'\otimes c'')=(\de c')\otimes c''+(-1)^q c'\otimes(\de c'')$, 
		for all $c'\in C^q(\cU,\cV)$ and $c''\in C(\cU,\cV)$; 
	\item 
		if the residual $G(\ka)=0$ then the composition $\de\com\de=0$,
		and the differential complex $\{C^q(\cU,\cV),\de^q \mid q\in\bbZ\}$ 
		is defined, with the cohomology spaces 
		$H^q(\cU,\cV)=\ke\de^q\big/\im\de^{q-1}$,  
		where $G(\ka)\in C^2(\cU,\cV)$, 
		\begin{equation*}
			G(k)(\eta',\eta'')=\ka\eta'\com\ka\eta''-\ka(\eta'\com\eta'') 
			\quad\text{for all}\quad \eta',\eta''\in\fM(\cU). 
		\end{equation*}
\end{itemize}
\end{theorem}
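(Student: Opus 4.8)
The plan is to verify both assertions by direct expansion from the Hochschild rule, using only associativity of the composition $\com$ in $\fM(\cU)$ and in $\fM(\cV)$ together with $\cA$-linearity of the cochains; the hypothesis $G(\ka)=0$, that is, multiplicativity of $\ka$, will enter only at the very end of the second part. Throughout I would evaluate cochains on generic arguments $\eta_1,\eta_2,\dots\in\fM(\cU)$ and keep careful track of signs.

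For the graded Leibniz rule I would fix $c'\in C^p(\cU,\cV)$ and $c''\in C^q(\cU,\cV)$ and expand $\de(c'\otimes c'')$ on $(\eta_1,\dots,\eta_{p+q+1})$ by the Hochschild rule, using the definition of $\otimes$ to split every value into a $c'$-part on the first $p$ slots and a $c''$-part on the last $q$ slots. The front term $\ka\eta_1\com(c'\otimes c'')(\dots)$ reproduces the front term of $(\de c')\otimes c''$. In the interior sum I would classify each merge $\eta_r\com\eta_{r+1}$ by its position relative to the block boundary at $p$: for $r\le p-1$ it is internal to the $c'$-slot and reproduces the interior terms of $(\de c')\otimes c''$; for $r\ge p+1$, after the reindexing $r=p+s$, it reproduces $(-1)^p$ times the interior terms of $c'\otimes(\de c'')$; the boundary value $r=p$ places $\eta_p\com\eta_{p+1}$ as the last argument of $c'$, which is exactly the top interior term of $\de c'$. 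The back term matches the back term of $c'\otimes(\de c'')$ once one checks $(-1)^{p+q+1}=(-1)^p(-1)^{q+1}$. The only surplus contributions are the two boundary $\ka$-terms $(-1)^{p+1}c'(\dots)\com\ka\eta_{p+1}\com c''(\dots)$ from the back of $\de c'$ and $(-1)^{p}c'(\dots)\com\ka\eta_{p+1}\com c''(\dots)$ from the front of $\de c''$; these cancel because their signs are opposite, which gives $\de(c'\otimes c'')=(\de c')\otimes c''+(-1)^p c'\otimes(\de c'')$.

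For $\de\com\de=0$ I would expand $\de(\de c)$ for $c\in C^q(\cU,\cV)$ and sort the resulting double terms into nine families according to whether the outer and the inner differential each contribute a front-$\ka$, an interior merge, or a back-$\ka$. The front--back and back--front mixed terms cancel immediately by sign. The outer-front--inner-merge sum cancels against the inner-front parts of the outer-merge terms with $r\ge2$, and symmetrically the outer-back--inner-merge sum cancels against the inner-back parts of the outer-merge terms with $r\le q$. The genuinely combinatorial family is the merge--merge sum $\sum_{r,t}(-1)^{r+t}c(\dots)$: two non-adjacent merges yield the same $q$-tuple through two orderings whose signs differ because the first merge shifts the index of the second, so they cancel, while two adjacent merges yield a triple $\eta_i\com\eta_{i+1}\com\eta_{i+2}$ that is bracketing-independent by associativity and again occurs with opposite signs. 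After these cancellations only two corners survive: the outer-front--inner-front term combined with the $r=1$ inner-front term collapses to $G(\ka)(\eta_1,\eta_2)\com c(\eta_3,\dots,\eta_{q+2})$, and the outer-back--inner-back term combined with the $r=q+1$ inner-back term collapses to $-\,c(\eta_1,\dots,\eta_q)\com G(\ka)(\eta_{q+1},\eta_{q+2})$. Hence in general $\de(\de c)(\eta_1,\dots,\eta_{q+2})=G(\ka)(\eta_1,\eta_2)\com c(\eta_3,\dots,\eta_{q+2})-c(\eta_1,\dots,\eta_q)\com G(\ka)(\eta_{q+1},\eta_{q+2})$, so $G(\ka)=0$ forces $\de\com\de=0$ and the complex with $H^q(\cU,\cV)=\ke\de^q/\im\de^{q-1}$ is defined.

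The main difficulty is organizational rather than conceptual: keeping the signs straight across the nine families and verifying that the merge--merge family cancels in full. The delicate point is the index shift that appears when two separated merges are carried out in the opposite order --- this is the cosimplicial identity $\partial_i\partial_j=\partial_{j-1}\partial_i$ for $i<j$ in disguise --- together with the observation that the two surviving corners are exactly the residual $G(\ka)$ evaluated at the two ends of the cochain. Once these are in place, the Leibniz identity needs only the single boundary cancellation, and the vanishing of $\de\com\de$ under $G(\ka)=0$ is immediate.
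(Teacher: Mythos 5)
Your proposal is correct, and it is exactly the ``direct verification'' that the paper's one-line proof refers to: the graded Leibniz identity follows from sorting the interior merges by the block boundary and cancelling the two boundary $\ka$-terms, and the general identity $\de(\de c)(\eta_1,\dots,\eta_{q+2})=G(\ka)(\eta_1,\eta_2)\com c(\eta_3,\dots,\eta_{q+2})-c(\eta_1,\dots,\eta_q)\com G(\ka)(\eta_{q+1},\eta_{q+2})$ checks out (e.g.\ for $q=0,1$ by hand), so $G(\ka)=0$ indeed gives $\de\com\de=0$. No gaps; the sign bookkeeping and the cosimplicial cancellation are handled as they should be.
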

\begin{proof} 
The proof is done by the direct verification. 
\end{proof}

\section{The differentiation in algebras and modules.}
\subsection{The differentiation in algebras.}
\begin{defi}
Let $\cA$ be an associative commutative algebra. 
A linear mapping $X\in\End_\bbF(\cA)$ is called a 
{\it  differentiation of the algebra} $\cA$, if the {\it Leibniz rule}
\begin{equation*}
	X(f\cdot g)=Xf\cdot g+f\cdot Xg \quad\text{holds for all\quad} f,g\in\cA. 
\end{equation*} 
Let $\fD(\cA)$ be the set of all differentiations of the algebra $\cA$. 
\end{defi}

The set $\fD(\cA)$ has the natural  structure of a Lie algebra 
with the commuta\-tor, $[X,Y]=X\com Y-Y\com X$, $X,Y\in\fD(\cA)$, as the Lie bracket. 

\begin{prop} 
Let $\cA$ be an associative commutative algebra. 
The following statements hold: 
\begin{itemize} 
	\item 
		the action $\fM(\cA)\to\End_\bbF(\fD(\cA))$ is defined by 
		the composition rule,  
\begin{equation*} 
	\fM(\cA)\ni R\mapsto R\com : \fD(\cA)\to\fD(\cA), \quad X\mapsto R\com X; 
\end{equation*} 
	\item 
		the action $\fD(\cA)\to\fD(\fM(\cA))$ is defined by the commutator rule, 
\begin{equation*} 
	\fD(\cA)\ni X\mapsto [X, \ ] : \fM(\cA)\to\fM(\cA), \quad R\mapsto [X,R]; 
\end{equation*} 
	\item 
		these actions are related by the {\it matching condition} 
\begin{equation*} 
	[X,R\com Y]=[X,R]\com Y+R\com[X,Y] \quad\text{for all}\quad X,Y\in\fD(\cA), \ R\in\fM(\cA). 
\end{equation*}
\end{itemize}
\end{prop}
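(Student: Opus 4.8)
The plan is to verify all three assertions by direct computation inside the ambient associative algebra $\End_\bbF(\cA)$, where composition is the product and the commutator is the associated Lie bracket. The only structural inputs are the Leibniz rule for $X,Y\in\fD(\cA)$, the multiplicator rule $R(f\cdot g)=f\cdot Rg$ for $R\in\fM(\cA)$, and the commutativity of $\cA$. I would treat the three bullets in order, observing that the second and third reduce, after one genuinely algebra-specific step, to the standard fact that an inner derivation $\ad_X=[X,\,\cdot\,]$ is a derivation of an associative algebra.

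For the first bullet I would show that $R\com X\in\fD(\cA)$ whenever $R\in\fM(\cA)$ and $X\in\fD(\cA)$. Applying $R$ to $X(f\cdot g)=Xf\cdot g+f\cdot Xg$ and using the multiplicator rule on each summand gives $R(f\cdot Xg)=f\cdot RXg$ at once, while for the other summand I would invoke commutativity of $\cA$ to write $R(Xf\cdot g)=R(g\cdot Xf)=g\cdot RXf=RXf\cdot g$. Hence
\[(R\com X)(f\cdot g)=RXf\cdot g+f\cdot RXg=(R\com X)f\cdot g+f\cdot(R\com X)g,\]
which is the Leibniz rule. Linearity of $X\mapsto R\com X$ is immediate, and $R\mapsto(R\com)$ is an algebra morphism into $\End_\bbF(\fD(\cA))$ because $(R'\com R'')\com X=R'\com(R''\com X)$ by associativity of composition.

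For the second bullet I would first check that $[X,R]\in\fM(\cA)$. Expanding $[X,R](f\cdot g)=X\big(R(f\cdot g)\big)-R\big(X(f\cdot g)\big)$ by the multiplicator rule and the Leibniz rule yields a term $Xf\cdot Rg$ common to both halves, which cancels in the difference, leaving $[X,R](f\cdot g)=f\cdot(XRg-RXg)=f\cdot[X,R]g$; this is precisely the multiplicator rule, so $[X,R]$ is a multiplicator (note that, in contrast to the first bullet, no appeal to commutativity is needed here). It then remains to see that $R\mapsto[X,R]$ is a differentiation of the algebra $\fM(\cA)$, and this is the general associative-algebra identity $[X,R'\com R'']=[X,R']\com R''+R'\com[X,R'']$ valid in $\End_\bbF(\cA)$; since $[X,\,\cdot\,]$ carries $\fM(\cA)$ into itself by the previous step, it is a well-defined derivation of $\fM(\cA)$.

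The third bullet, the matching condition, is then formal: it is exactly the derivation property of $\ad_X$ in $\End_\bbF(\cA)$ applied to the product $R\com Y$, namely $[X,R\com Y]=[X,R]\com Y+R\com[X,Y]$, both sides unwinding to $XRY-RYX$. One should only remark that every term lies in $\fD(\cA)$: $R\com Y$ and $R\com[X,Y]$ are differentiations by the first bullet, $[X,R]$ is a multiplicator by the second so $[X,R]\com Y$ is a differentiation by the first, and $[X,R\com Y]$ is a differentiation since $\fD(\cA)$ is closed under the bracket. The main obstacle, such as it is, lies entirely in the first halves of bullets one and two: one must use commutativity of $\cA$ at exactly the right place to rewrite $R(Xf\cdot g)$ as $RXf\cdot g$, and one must track the term $Xf\cdot Rg$ carefully so that it cancels in the commutator; everything thereafter is the routine fact that inner derivations are derivations.
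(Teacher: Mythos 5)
Your proof is correct, and since the paper states this proposition without proof (it is left as one of the routine ``direct verification'' claims), your computation is exactly the intended argument. You correctly isolate the only two non-mechanical points --- using commutativity of $\cA$ to rewrite $R(Xf\cdot g)$ as $RXf\cdot g$ in the first bullet, and the cancellation of $Xf\cdot Rg$ in the second --- after which the derivation property of $[X,\,\cdot\,]$ in $\End_\bbF(\cA)$ gives the rest.
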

Thus, the set $\fD(\cA)$ has the structure of a Lie algebra 
and the structure of an $\fM(\cA)$-module, related by the matching condition. 
Briefly, $\fD(\cA)$ is a Lie $\fM(\cA)$-algebra. 

\begin{prop}
Let $\cA$ be an associative commutative algebra. 
The following statements are valid: 
\begin{itemize} 
	\item 
		for any $X\in\fD(\cA)$ the kernel $\ke X=\{f\in\cA\mid Xf=0\}$ is a subalgebra 
		of the algebra $\cA$, while the image $\im X=\{f=Xg\mid g\in\cA\}$ 
		is a $\ke X$-module;
	\item
		for any $X\in\fD(\cA)$ the equality $X(f\cdot g)=f\cdot Xg$ holds 
		for all $f\in\ke X$ and $g\in\cA$,  i.e., $X\in\End_{\ke X}(\cA)$; 
	\item 
		for any $X\in\fD(\cA)$ the image $X(\ann\cA)\subset\ann\cA$.
\end{itemize}
\end{prop}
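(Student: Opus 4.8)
The plan is to derive all three bullets from a single tool, the Leibniz rule $X(f\cdot g)=Xf\cdot g+f\cdot Xg$, together with the $\bbF$-linearity of $X$. None of the statements requires more than one application of this identity, so the work is organizational rather than technical; the one point demanding a little care is keeping track of \emph{which} algebra is acting in the second half of the first bullet.

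First I would handle $\ke X$. Since $X$ is $\bbF$-linear, $\ke X$ is automatically a linear subspace, so the only new content is closure under multiplication: for $f,g\in\ke X$ the Leibniz rule gives $X(f\cdot g)=Xf\cdot g+f\cdot Xg=0+0=0$, whence $f\cdot g\in\ke X$ and $\ke X$ is a subalgebra. For the image, I would exhibit $\im X$ as closed under the action of $\ke X$: taking $f\in\ke X$ and an arbitrary element $Xg\in\im X$, the Leibniz rule rearranges to $f\cdot Xg=X(f\cdot g)-Xf\cdot g=X(f\cdot g)$, which lies in $\im X$ because $Xf=0$ kills the obstructing term. Combined with the linearity of $\im X$, this shows $\im X$ is a module over the subalgebra $\ke X$ (and this is precisely where one must resist the temptation to claim an $\cA$-module structure, which generally fails).

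The second bullet is the same computation read differently: for $f\in\ke X$ and any $g\in\cA$ the Leibniz rule collapses to $X(f\cdot g)=Xf\cdot g+f\cdot Xg=f\cdot Xg$, which is exactly the statement that $X$ is $\ke X$-linear, i.e.\ $X\in\End_{\ke X}(\cA)$. For the third bullet I would take $f\in\ann\cA$, so that $f\cdot\cA=0$, and apply $X$ to the identity $f\cdot g=0$ for an arbitrary $g\in\cA$; the Leibniz rule yields $0=X(f\cdot g)=Xf\cdot g+f\cdot Xg=Xf\cdot g$, where the second term vanishes because $f\in\ann\cA$. As $g$ was arbitrary this gives $Xf\cdot\cA=0$, i.e.\ $Xf\in\ann\cA$, proving $X(\ann\cA)\subset\ann\cA$. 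The main ``obstacle'' is thus not a hard estimate but the bookkeeping: in each case exactly one of the two Leibniz terms drops out (because of $Xf=0$ or $f\cdot\cA=0$), and the proof is simply a matter of recording which one.
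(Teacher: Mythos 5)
Your proof is correct and is exactly the routine verification the paper leaves to the reader (no proof is given in the source): each bullet follows from one application of the Leibniz rule with one of the two terms vanishing. The bookkeeping is right in every case, including the careful observation that $\im X$ is only a $\ke X$-module rather than an ideal of $\cA$.
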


\subsection{The differentiation in modules.}
\begin{defi}
Let $\cA$ be an associative commutative algebra, and let $\cM$ be an $\cA$-module. 
A pair $\bX=(\na_X,X)\in\E_\bbF(\cM,\cA)=\End_\bbF(\cM)\times\fD(\cA)$ 
is called a {\it differentiation of the $\cA$-module $\cM$}, if the {\it Leibniz rule} 
\begin{equation*}
	\na_X(f\cdot M)=Xf\cdot M+f\cdot\na_X M 
		\quad\text{holds for all}\quad f\in\cA,  \ \ M\in\cM
\end{equation*} 
(remind, $\fD(\cA)\subset\End_\bbF(\cA)$). 
Let $\fD(\cM)$ be the set of all differentiations of the $\cA$-module $\cM$. 
\end{defi} 
The set $\fD(\cM)$  has the structure of a Lie algebra with the component-wise commutator 
$[\bX,\bY]=([\na_X,\na_Y],[X,Y])$ as the Lie bracket. 

\begin{prop} 
Let $\cA$ be an associative commutative algebra, and let $\cM$ be an $\cA$-module. 
The following statements hold: 
\begin{itemize} 
	\item 
		the action $\fM(\cM)\to\End_\bbF(\fD(\cM))$ is defined by the component-wise 
		composition rule, 
\begin{equation*} 
	\fM(\cM)\ni\bR\mapsto\bR :\fD(\cM)\to\fD(\cM), \quad \bX\mapsto\bR\com\bX,
\end{equation*}
		where $\bR=(\De_R,R)$, $\bX=(\na_X,X)$, 
		$\bR\com\bX=(\De_R\com\na_X,R\com X)$; 
	\item 
		the action $\fD(\cM)\to\fD(\fM(\cM))$ is defined by the component-wise 
		commutator rule, 
\begin{equation*} 
	\fD(\cM)\ni\bX\mapsto\bX : \fM(\cM)\to\fM(\cM), \quad \bR\mapsto[\bX,\bR], 
\end{equation*}
		where $\bX=(\na_X,X)$, $\bR=(\De_R,R)$, 
		$[\bX,\bR]=([\na_X,\De_R], [X,R])$; 
	\item 
		these actions are related by the matching condition 
\begin{equation*}
	[\bX,\bR\com\bY]=[\bX,\bR]\com\bY+\bR\com[\bX,\bY] 
\end{equation*} 
for all $\bX,\bY\in\fD(\cM)$, $\bR\in\fM(\cM)$. 
\end{itemize}
\end{prop}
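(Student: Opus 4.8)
The plan is to prove all three statements by reducing them to component-wise verifications, exploiting the corresponding facts already established for algebras together with the universal identity $[A,B\com C]=[A,B]\com C+B\com[A,C]$, valid in any associative algebra (here $\End_\bbF(\cM)$ and $\End_\bbF(\cA)$). Throughout I will use the two facets of a module multiplicator $\bR=(\De_R,R)\in\fM(\cM)$: that $\De_R$ is $\cA$-linear, $\De_R(f\cdot N)=f\cdot\De_R N$, and that it obeys the multiplicator rule $\De_R(f\cdot N)=Rf\cdot N$; combining them gives $f\cdot\De_R N=Rf\cdot N$.

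For the first statement I would first check that $\bR\com\bX=(\De_R\com\na_X,R\com X)$ indeed lies in $\fD(\cM)$. The second component $R\com X\in\fD(\cA)$ by the composition action for algebras established above. For the Leibniz rule of the pair I compute $(\De_R\com\na_X)(f\cdot M)$ by first applying the Leibniz rule for $\bX$ and then the multiplicator rule for $\De_R$, obtaining $(R\com X)f\cdot M+Rf\cdot\na_X M$; rewriting the last term as $f\cdot\De_R(\na_X M)$ via $f\cdot\De_R N=Rf\cdot N$ yields exactly $(R\com X)f\cdot M+f\cdot(\De_R\com\na_X)M$. That $\bR\mapsto(\bX\mapsto\bR\com\bX)$ is an $\bbF$-linear monoid action into $\End_\bbF(\fD(\cM))$ then follows from the associativity of composition in each component.

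For the second statement the main point is well-definedness: I must show $[\bX,\bR]=([\na_X,\De_R],[X,R])$ belongs to $\fM(\cM)$. The second component $[X,R]\in\fM(\cA)$ by the commutator action for algebras. For the first component I verify two things separately. The $\cA$-linearity of $[\na_X,\De_R]$ follows by expanding $[\na_X,\De_R](f\cdot M)$ using only the $\cA$-linearity of $\De_R$ and the Leibniz rule of $\na_X$: the two $Xf\cdot\De_R M$ terms cancel, leaving $f\cdot[\na_X,\De_R]M$. The multiplicator rule $[\na_X,\De_R](f\cdot M)=[X,R]f\cdot M$ follows by the parallel expansion that instead uses the multiplicator rule of $\De_R$: here the two $Rf\cdot\na_X M$ terms cancel, leaving $(X(Rf)-R(Xf))\cdot M$. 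Once $[\bX,\cdot]$ is known to take values in $\fM(\cM)$, its derivation property $[\bX,\bR'\com\bR'']=[\bX,\bR']\com\bR''+\bR'\com[\bX,\bR'']$ is the universal identity applied in each component, so $[\bX,\cdot]\in\fD(\fM(\cM))$.

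Finally, the matching condition is again this universal identity read component-wise: writing $\bR\com\bY=(\De_R\com\na_Y,R\com Y)$ and $[\bX,\bR\com\bY]=([\na_X,\De_R\com\na_Y],[X,R\com Y])$, the first component reads $[\na_X,\De_R\com\na_Y]=[\na_X,\De_R]\com\na_Y+\De_R\com[\na_X,\na_Y]$ and the second is precisely the algebra matching condition $[X,R\com Y]=[X,R]\com Y+R\com[X,Y]$ proved earlier. The only genuinely delicate step is the well-definedness in the second statement: because membership in $\fM(\cM)$ demands both $\cA$-linearity and the multiplicator rule (two conditions that collapse to one in the algebra case $\fM(\cA)=\End_\cA(\cA)$), one must run the two cancellation computations separately and note that they are consistent, the cancellation of the non-$\cA$-linear $Xf$-terms being exactly what makes the commutator land back in $\fM(\cM)$.
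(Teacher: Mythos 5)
Your proof is correct. The paper states this proposition without any proof (deferring, as with most of its propositions, to the author's earlier works for ``direct verification''), and your component-wise argument --- including the genuinely necessary step of checking both the $\cA$-linearity and the multiplicator rule for $[\na_X,\De_R]$ separately before invoking the universal identity $[A,B\com C]=[A,B]\com C+B\com[A,C]$ --- is exactly the intended verification.
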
 
Thus, the set $\fD(\cM)$ has the structure of a Lie algebra 
and the structure of an $\fM(\cM)$-module, related by the matching condition. 
Briefly, $\fD(\cM)$ is a Lie $\fM(\cM)$-algebra. 

\begin{prop} 
	Let $\cA$ be an associative commutative algebra, and let $\cM$ be an $\cA$-module. 
The following statements hold: 
\begin{itemize} 
	\item 
		for any $\bX=(\na_X,X)\in\fD(\cM)$ the kernel $\ke\na_X=\{M\in\cM\mid \na_X M=0\}$ 
		and the image $\im\na_X=\{M=\na_X N\mid N\in\cM\}$ are $\ke X$-modules; 
	\item 
		for any $\bX=(\na_X,X)\in\fD(\cM)$ the equality 
		$\na_X(f\cdot M)=f\cdot\na_X M$ holds 
		for all $f\in\ke X$ and $M\in\cM$, i.e., $\na_X\in\End_{\ke X}(\cM)$; 
	\item 
		for any $\bX=(\na_X,X)\in\fD(\cM)$ the image $\na_X(\ann_\cM\cA)\subset\ann_\cM\cA$, 
		and the image $X(\ann_\cA\cM)\subset\ann_\cA\cM$.
\end{itemize}
\end{prop}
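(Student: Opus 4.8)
The plan is to read off all three assertions directly from the Leibniz rule $\na_X(f\cdot M)=Xf\cdot M+f\cdot\na_X M$ that defines $\bX=(\na_X,X)\in\fD(\cM)$, in exact parallel with the preceding proposition for $\fD(\cA)$ (whose statements are the algebra analogues of these). The entire argument is a sequence of substitutions into this one identity, and the only thing to track is which summand on the right vanishes and why. I would prove the middle assertion first, since it is the cleanest and already yields half of the first.

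For $f\in\ke X$ and $M\in\cM$ we have $Xf=0$, so the Leibniz rule collapses to $\na_X(f\cdot M)=Xf\cdot M+f\cdot\na_X M=f\cdot\na_X M$; recalling from the preceding proposition that $\ke X$ is a subalgebra of $\cA$ (so $\cM$ is a $\ke X$-module by restriction of scalars), this is precisely the statement $\na_X\in\End_{\ke X}(\cM)$. Granting it, the first assertion is immediate. If $M\in\ke\na_X$ and $f\in\ke X$, then $\na_X(f\cdot M)=f\cdot\na_X M=0$, so $f\cdot M\in\ke\na_X$; together with the $\bbF$-linearity of $\na_X$ this exhibits $\ke\na_X$ as a $\ke X$-submodule. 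Dually, for $M=\na_X N\in\im\na_X$ and $f\in\ke X$ one has $f\cdot M=f\cdot\na_X N=\na_X(f\cdot N)\in\im\na_X$, so $\im\na_X$ is a $\ke X$-submodule as well.

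For the third assertion I would test the defining conditions of the two annihilators in turn. Given $M\in\ann_\cM\cA$, every $g\in\cA$ satisfies both $g\cdot M=0$ and $Xg\cdot M=0$ (the latter because $Xg\in\cA$), so the Leibniz rule gives $0=\na_X(g\cdot M)=Xg\cdot M+g\cdot\na_X M=g\cdot\na_X M$; since $g$ is arbitrary, $\na_X M\in\ann_\cM\cA$. Symmetrically, given $f\in\ann_\cA\cM$, every $N\in\cM$ satisfies $f\cdot N=0$ and $f\cdot\na_X N=0$ (the latter because $\na_X N\in\cM$), whence $0=\na_X(f\cdot N)=Xf\cdot N+f\cdot\na_X N=Xf\cdot N$, so $Xf\in\ann_\cA\cM$.

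There is no genuine obstacle: this is pure direct verification, in the spirit of the earlier Hochschild theorem. The single point that repays a moment of care is the third assertion, where one must apply the annihilator hypothesis not only to the original element but also to its image $Xg$ (respectively $\na_X N$) in order to annihilate \emph{both} terms on the right-hand side of the Leibniz identity; overlooking that the derived element still lies in the relevant annihilator is the only way the computation could go wrong.
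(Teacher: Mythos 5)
Your proof is correct, and it is exactly the direct verification from the Leibniz rule that the paper intends (the paper states this proposition without proof, in keeping with its pattern of leaving such routine checks to the reader). All substitutions, including the careful handling of both terms of the Leibniz identity in the annihilator statements, are sound.
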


\begin{prop}\label{P13} 
Let $\cM$ be an $\cA$-module. 
There is the natural Lie $\cA$-algebra injection 
\begin{equation*} 
	\I : \gl_\cA(\cM)\to\fD(\cM), \quad \rho\mapsto(\rho,0), 
		\quad(\text{i.e., } \na_0=\rho).
\end{equation*} 
Moreover, the image $\im\I=\{\bX=(\rho,0)\mid \rho\in\End_\cA(\cM)\}$ 
is an ideal of the Lie $\fM(\cM)$-algebra $\fD(\cM)$, 
thus the short exact sequence of Lie algebras 
\begin{diagram}[2em] 
	0&\rTo&\gl_\cA(\cM)&\rTo^\I&\fD(\cM)&\rTo&\fD(\cM)\big/\im\I&
		\rTo&0
\end{diagram} 
is defined.
\end{prop}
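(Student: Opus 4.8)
The plan is to establish the statement in three stages: first that $\I$ is a well-defined Lie $\cA$-algebra injection, then that $\im\I$ is simultaneously a Lie ideal and an $\fM(\cM)$-submodule of $\fD(\cM)$, and finally that the exact sequence follows formally.

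First I would check that $(\rho,0)$ really lies in $\fD(\cM)$. Substituting $X=0$ into the Leibniz rule $\na_X(f\cdot M)=Xf\cdot M+f\cdot\na_X M$ gives $\rho(f\cdot M)=f\cdot\rho M$, which is precisely $\cA$-linearity of $\rho$; thus the Leibniz condition for the pair $(\rho,0)$ is equivalent to $\rho\in\End_\cA(\cM)=\gl_\cA(\cM)$, and $\I$ is well defined with the stated image. Injectivity is immediate by reading off the first component. That $\I$ is a morphism of Lie algebras follows from the component-wise bracket $[\bX,\bY]=([\na_X,\na_Y],[X,Y])$: for two elements of the image the second components vanish, so $[(\rho,0),(\sigma,0)]=([\rho,\sigma],0)=\I[\rho,\sigma]$. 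The $\cA$-linearity is verified against the $\cA$-action induced by ${\bf ad}$, under which $f\cdot(\rho,0)=(\ad_f\com\rho,0)=(f\cdot\rho,0)=\I(f\cdot\rho)$.

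Next I would verify the two halves of the phrase \emph{ideal of the Lie $\fM(\cM)$-algebra}. For the Lie ideal property, take $\bX=(\na_X,X)\in\fD(\cM)$ and $(\rho,0)\in\im\I$; then $[\bX,(\rho,0)]=([\na_X,\rho],[X,0])=([\na_X,\rho],0)$, so the task reduces to showing $[\na_X,\rho]\in\End_\cA(\cM)$. This is the crux computation: expanding $[\na_X,\rho](f\cdot M)$ via the Leibniz rule for $\na_X$ and the $\cA$-linearity of $\rho$, the two occurrences of $Xf\cdot\rho M$ cancel and one is left with $f\cdot[\na_X,\rho]M$. For the submodule property, take $\bR=(\De_R,R)\in\fM(\cM)$; then $\bR\com(\rho,0)=(\De_R\com\rho,0)$, and $\De_R\com\rho$ is $\cA$-linear simply as a composite of the two $\cA$-linear maps $\rho$ and $\De_R\in\End_\cA(\cM)$.

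I expect the cancellation in $[\na_X,\rho](f\cdot M)$ to be the only genuinely nontrivial point: it is the module-theoretic version of the classical fact that the commutator of a connection with an endomorphism is again an endomorphism, and it is exactly what forces $\gl_\cA(\cM)$ to appear as an ideal rather than merely a subalgebra. Once both stages are in place, the short exact sequence is formal: $\I$ is an injective Lie morphism with image a Lie ideal, so $\fD(\cM)\big/\im\I$ inherits a Lie-algebra structure and the displayed sequence is exact by construction.
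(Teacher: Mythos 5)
Your proof is correct and complete: the paper itself states Proposition~\ref{P13} without any proof (deferring, as with most results here, to the author's earlier works), so there is no in-paper argument to compare against, and your verification is the natural one. You correctly identify the one nontrivial point --- that $[\na_X,\rho](f\cdot M)=f\cdot[\na_X,\rho]M$ after the two $Xf\cdot\rho M$ terms cancel, which is what makes $\im\I$ a Lie ideal and not merely a subalgebra --- and you rightly check the $\fM(\cM)$-module closure $\bR\com(\rho,0)=(\De_R\com\rho,0)$ separately, which is needed for the phrase \emph{ideal of the Lie $\fM(\cM)$-algebra} rather than just \emph{Lie ideal}.
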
 

\begin{prop} 
Let $\cA$ be an associative commutative algebra, and let $\cM$ be a free $\cA$-module 
with an $\cA$-basis ${\mathbf b}=\{b_i\in\cM\mid i\in I\}$, $I$ is an index set, 
so $\cM=\{M=M^i\cdot b_i\mid M^i\in\cA\}$. 
Then a Lie algebra injection $\fD(\cA)\to\fD(\cM)$ is defined by the component-wise action, 
\begin{equation*} 
	\fD(\cA)\ni X\mapsto\bX=(\na_X,X)\in\fD(\cM), 
		\quad \na_X(M^i\cdot b_i)=XM^i\cdot b_i.
\end{equation*}
\end{prop}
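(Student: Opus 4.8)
The plan is to verify four things about the assignment $X\mapsto\bX=(\na_X,X)$, in order: that $\na_X$ is a well-defined $\bbF$-linear endomorphism of $\cM$, that the pair $\bX$ satisfies the Leibniz rule so that $\bX\in\fD(\cM)$, that the assignment intertwines the Lie brackets, and that it is injective. The single structural input I would lean on throughout is freeness in its coordinate form: each $M\in\cM$ has a unique expansion $M=M^i\cdot b_i$ with $M^i\in\cA$, so the coordinate functionals $M\mapsto M^i$ are well defined and $\cA$-linear.

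First I would settle well-definedness: uniqueness of the $M^i$ makes $\na_X(M^i\cdot b_i)=XM^i\cdot b_i$ an unambiguous prescription, and $\bbF$-linearity of $\na_X$ follows from that of $X$ together with that of the coordinate functionals. For the Leibniz rule I would note that $f\cdot M$ has coordinates $(f\cdot M^i)$ and apply the Leibniz rule of $X$ in $\cA$ coordinate-wise; regrouping over the $b_i$ produces $Xf\cdot M+f\cdot\na_X M$ directly, so $\bX\in\fD(\cM)$. For the bracket I would evaluate $\na_X\com\na_Y$ and $\na_Y\com\na_X$ on a coordinate vector---each merely applies the operators to the coordinates---and subtract, obtaining $[\na_X,\na_Y]M=[X,Y]M^i\cdot b_i=\na_{[X,Y]}M$; combined with the obvious agreement on second components this matches the component-wise bracket $[\bX,\bY]=([\na_X,\na_Y],[X,Y])$ from the definition of $\fD(\cM)$.

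Injectivity requires no work: the second component of $\bX$ is $X$ itself, so the assignment is a section of the second-component projection $\fD(\cM)\to\fD(\cA)$ and is therefore injective. I anticipate no genuine obstacle; the one place demanding attention is that every step tacitly uses uniqueness of coordinates, i.e.\ freeness---drop that hypothesis and $\na_X$ fails to be even well defined. Everything else is routine bilinear bookkeeping over the basis $\mathbf{b}$.
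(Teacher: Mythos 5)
Your proof is correct. Note that the paper states this proposition without any proof at all (it defers such verifications to the references), so there is nothing to compare against; your argument is the natural one. All four steps check out: freeness gives unique coordinates $M^i$ so $\na_X$ is well defined and $\bbF$-linear, the coordinate-wise Leibniz computation $X(f\cdot M^i)=Xf\cdot M^i+f\cdot XM^i$ yields the Leibniz rule for the pair $(\na_X,X)$, the commutator computation matches the component-wise bracket on $\fD(\cM)$, and injectivity is immediate from the second component. The only point worth making explicit, which your argument handles implicitly, is that elements of a free module are \emph{finite} $\cA$-combinations of the $b_i$, so all sums appearing in the coordinate bookkeeping are finite even when the index set $I$ is infinite.
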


There is a nice way to treat differentiations of an $\cA$-module $\cM$. 
Namely, let us consider the mapping  
\begin{equation*} 
	\Pi : \fD(\cM)\to\fD(\cA), \quad \bX=(\na_X,X)\mapsto X.  
\end{equation*}
{\it We assume that the mapping $\Pi$ is the surjection}, i.e., 
$\im\Pi=\fD(\cA)$. In this case, the inverse image 
$\Pi^{-1}(X)=\{\bX=(\na_X,X)\in\fD(\cM)\}$ 
is defined for every $X\in\fD(\cA)$. 
Clear, the mapping 
$\Pi\in\Hom_\cA(\fD(\cM);\fD(\cA))\cap\Hom_{\Lie}(\fD(\cM);\fD(\cA))$, 
i.e., briefly, $\Pi\in\Hom_{\cA\text{-}\Lie}(\fD(\cM);\fD(\cA))$. 

\begin{theorem}\label{T2} 
Let $\cA$ be an associative commutative algebra, 
and let $\cM$ be an $\cA$-module. 
For every $X\in\fD(\cA)$ the inverse image $\Pi^{-1}(X)$ 
is an $\cA$-affine space over the $\cA$-module $\End_\cA(\cM)$. 
Hence, the triple $\Pi : \fD(\cM)\to\fD(\cA)$ is an $\cA$-affine fiber bundle 
over the $\cA$-module $\End_\cA(\cM)$.
\end{theorem}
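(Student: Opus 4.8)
The plan is to mimic the proof of Theorem~\ref{T1}: to exhibit on each fiber $\Pi^{-1}(X)$ a free and transitive action of the $\cA$-module $\End_\cA(\cM)$, which is exactly what it means for $\Pi^{-1}(X)$ to be an $\cA$-affine space over $\End_\cA(\cM)$. Concretely I would verify two complementary claims. First, that the difference of any two lifts of the same $X$ lies in $\End_\cA(\cM)$. Second, that adding any element of $\End_\cA(\cM)$ to a lift of $X$ again produces a lift of $X$. The standing surjectivity assumption on $\Pi$ supplies at least one lift in every fiber, so the two claims together deliver the affine structure.

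For the first claim, fix $\bX'=(\na'_X,X)$ and $\bX''=(\na''_X,X)$ in $\Pi^{-1}(X)$. Both satisfy the Leibniz rule, so subtracting the identities $\na'_X(f\cdot M)=Xf\cdot M+f\cdot\na'_X M$ and $\na''_X(f\cdot M)=Xf\cdot M+f\cdot\na''_X M$ makes the common term $Xf\cdot M$ cancel, leaving
\[ (\na'_X-\na''_X)(f\cdot M)=f\cdot(\na'_X-\na''_X)M \quad\text{for all } f\in\cA,\ M\in\cM, \]
which is precisely the statement that $\na'_X-\na''_X\in\End_\cA(\cM)$.

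For the second claim, take $\bX=(\na_X,X)\in\Pi^{-1}(X)$ and any $\rho\in\End_\cA(\cM)$. Since $\rho(f\cdot M)=f\cdot\rho M$, the map $\na_X+\rho$ again obeys the Leibniz rule, $(\na_X+\rho)(f\cdot M)=Xf\cdot M+f\cdot\na_X M+f\cdot\rho M=Xf\cdot M+f\cdot(\na_X+\rho)M$, so $\bX'=(\na_X+\rho,X)\in\Pi^{-1}(X)$. Because the structure module $\End_\cA(\cM)$ is the same for every $X$, the two claims upgrade to the assertion that $\Pi:\fD(\cM)\to\fD(\cA)$ is an $\cA$-affine fiber bundle over $\End_\cA(\cM)$.

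I do not expect a genuine obstacle; the argument is a one-line cancellation in each direction. The only point worth flagging is the contrast with Theorem~\ref{T1}: there the multiplicator rule $\De_R(f\cdot M)=Rf\cdot M$ carries no $f\cdot\De_R M$ term, so the difference of two lifts is driven into the annihilator and produces $\Hom_\cA(\cM;\ann_\cM\cA)$; here the Leibniz term $f\cdot\na_X M$ survives the subtraction, and that is exactly why the full endomorphism module $\End_\cA(\cM)$, rather than a proper submodule of it, serves as the structure module.
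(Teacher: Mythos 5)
Your proposal is correct and follows essentially the same two-step argument as the paper: subtract the Leibniz identities of two lifts to see the difference lands in $\End_\cA(\cM)$, then check that adding any $\rho\in\End_\cA(\cM)$ to a lift preserves the Leibniz rule. Your closing remark contrasting the cancellation here with the annihilator-valued difference in Theorem~\ref{T1} is a nice observation, but the core proof is the paper's own.
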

\begin{proof} 
Indeed, let $X\in\fD(\cA)$, and $\bX'=(\na'_X,X),\bX''=(\na''_X,X)\in\Pi^{-1}(X)$, 
then the difference 
$(\na'_X-\na''_X)(f\cdot M)=(X-X)f\cdot M+f\cdot(\na'_X-\na''_X)M=f\cdot(\na'_X-\na''_X)M$ 
for all $f\in\cA$ and $M\in\cM$, i.e., $\na'_X-\na''_X\in\End_\cA(\cM)$. 
On the other side, let $\bX=(\na_X,X)\in\Pi^{-1}(X)$ and $\rho\in\End_\cA(\cM)$, 
then the sum $(\na_X,X)+(\rho,0)=(\na_X+\rho,X)\in\Pi^{-1}(X)$, 
because $\Pi(\na_X+\rho,X)=X$, and 
$(\na_X+\rho)(f\cdot M)=\na_X(f\cdot M)+\rho(f\cdot M)=Xf\cdot M+f\cdot\na_X M+f\cdot\rho M
=Xf\cdot M+f\cdot(\na_X+\rho)M$ for all $f\in\cA$ and $M\in\cM$.
\end{proof}

We denote by 
\begin{equation*} 
	\cS(\Pi)=\big\{\bna\in\Hom(\fD(\cA);\fD(\cM)) \ \big| \ 
		\Pi\com\!\bna=\id_{\fD(\cA)}\big\}
\end{equation*} 
the set of all {\it sections} of the bundle $\Pi$. In more detail, 
$X\mapsto\bna_X=(\na_X,X)$ for all $X\in\fD(\cA)$. 
There are the following relevant subsections: 
\begin{equation*} 
	\cS_*(\Pi)=\cS(\Pi)\cap\Hom_*(\fD(\cA);\fD(\cM)), 
		\quad *=\bbF, \ \cA, \ \Lie, \ \cA\text{-}\Lie.
\end{equation*}
The 
set $\cS_\bbF(\Pi)$ has the structure of a linear space, 
$\cS_\cA(\Pi)$ has the structure of an $\cA$-module, 
$\cS_{\Lie}(\Pi)$ has the structure of a Lie algebra, 
and $\cS_{\cA\text{-}\Lie}(\Pi)$ has the structure of Lie $\cA$-algebra, 
where the algebraic operations are defined point-wise. 
Elements of the $\cA$-module $\cS_\cA(\Pi)$ are usually called 
{\it covariant derivatives}. 

Let $\cA$ be associative commutative algebra, 
and let $\pmb\cA$ be the adjoint $\cA$-module. Then 
\begin{itemize} 
	\item 
		there is the natural Lie algebra injection \\
		$\io_\fD : \fD(\cA)\to\fD(\pmb\cA), \quad X\mapsto\bX=(X,X)$; 
	\item 
		$\fD(\pmb\cA)=\im\io_\fD\oplus\End_\cA(\pmb\cA), 
		\quad  \bX\!=\!(\na_X,X)\!=\!(X,X)+(\na_X-X,0)$.
\end{itemize} 

\section{The gauge transform.} 
Let $\cA$ be an associative commutative algebra, and let $\cM$ be an $\cA$-module. 
Let $\Aut_\cA(\cM)$ be the group of all automorphisms of the $\cA$-module $\cM$,
i.e., the group of all mappings $G\in\End_\cA(\cM)$ having the inverse $G^{-1}\in\End_\cA(\cM)$, 
$G\com G^{-1}=G^{-1}\com G=\id_\cM$. 

\begin{defi} 
Let $\cA$ be an associative commutative algebra, and let $\cM$ be an $\cA$-module. 
Every mapping $G\in\Aut_\cA(\cM)$ defines the {\it gauge transform} of the 
$\cA$-algebra $\fM(\cM)$ by the rule 
\begin{equation*} 
	\bR=(\De_R,R)\mapsto\ad_G\bR=(\ad_G\De_R,R), 
	\quad \ad_G\De_R=G\com\De_R\com G^{-1}.
\end{equation*}
\end{defi} 

\begin{theorem} 
Let $\cA$ be an associative commutative algebra, and let $\cM$ be an $\cA$-module. 
The gauge transform of the $\cA$-algebra $\fM(\cM)$ defines the action 
\begin{equation*} 
	\ad : \Aut_\cA(\cM)\to\End_{\alg\cap\cA}(\fM(\cM)), \quad G\mapsto \ad_G.
\end{equation*}
\end{theorem}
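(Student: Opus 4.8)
The plan is to unpack the target $\End_{\alg\cap\cA}(\fM(\cM))$ into its constituent requirements — that each $\ad_G$ maps $\fM(\cM)$ into itself, is an algebra endomorphism, and is $\cA$-linear — verify these in turn, and then establish the action (homomorphism) property of $G\mapsto\ad_G$. Every step is a formal conjugation computation; the only place where the $\cA$-module automorphism property genuinely interacts with the multiplicator rule is well-definedness, which I would treat as the crux.

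First I would show that $\ad_G\bR=(G\com\De_R\com G^{-1},R)$ is again a multiplicator. The first component is $\cA$-linear as a composition of $\cA$-linear maps, since $G,G^{-1}\in\End_\cA(\cM)$. For the multiplicator rule, I compute, for $f\in\cA$ and $M\in\cM$,
\[
	(G\com\De_R\com G^{-1})(f\cdot M)=G\big(\De_R(f\cdot G^{-1}M)\big)=G(Rf\cdot G^{-1}M)=Rf\cdot M,
\]
using $\cA$-linearity of $G^{-1}$, the multiplicator rule for $\bR$, and $\cA$-linearity of $G$ together with $G\com G^{-1}=\id_\cM$. Hence $\ad_G\bR\in\fM(\cM)$ with unchanged second component $R$, so $\ad_G$ is a well-defined self-map of $\fM(\cM)$.

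Next I would check the algebra-endomorphism and $\cA$-linearity conditions. Since the product on $\fM(\cM)\subset\E_\cA(\cM,\cA)$ is componentwise composition, conjugation by $G$ telescopes:
\[
	\ad_G\bR'\com\ad_G\bR''=(G\com\De_{R'}\com G^{-1}\com G\com\De_{R''}\com G^{-1},R'\com R'')=\ad_G(\bR'\com\bR''),
\]
the cancellation $G^{-1}\com G=\id_\cM$ doing the work, while $\ad_G\mathbf{id}=\mathbf{id}$ because conjugation fixes $\id_\cM$. For $\cA$-linearity I use that the scalar action is componentwise and that $G$ commutes with multiplication by $h\in\cA$, giving $G\com(h\cdot\De_R)\com G^{-1}=h\cdot(G\com\De_R\com G^{-1})$, whence $\ad_G(h\cdot\bR)=h\cdot\ad_G\bR$; additivity is immediate. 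Thus each $\ad_G\in\End_{\alg\cap\cA}(\fM(\cM))$.

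Finally I would verify the action property. From $\ad_{\id_\cM}\bR=(\id_\cM\com\De_R\com\id_\cM,R)=\bR$ and, using $(G\com H)^{-1}=H^{-1}\com G^{-1}$,
\[
	\ad_{G\com H}\bR=\big((G\com H)\com\De_R\com(G\com H)^{-1},R\big)=\ad_G(\ad_H\bR),
\]
I conclude $\ad_{\id_\cM}=\id_{\fM(\cM)}$ and $\ad_{G\com H}=\ad_G\com\ad_H$, so $\ad$ is a monoid homomorphism from the group $\Aut_\cA(\cM)$ into $\End_{\alg\cap\cA}(\fM(\cM))$ — indeed landing in its unit group, with $\ad_{G^{-1}}$ inverse to $\ad_G$ — and hence defines the asserted action. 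I expect no real difficulty beyond the well-definedness computation; the remainder is routine conjugation bookkeeping.
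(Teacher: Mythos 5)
Your proof is correct and follows essentially the same route as the paper's: a direct conjugation computation verifying that $\ad_G\bR$ satisfies the multiplicator rule, that $\ad_G$ respects the componentwise composition product, and that the relevant $\cA$-linearity holds. You are in fact slightly more thorough than the paper, which leaves the monoid-homomorphism property $\ad_{G\com H}=\ad_G\com\ad_H$ and $\ad_{\id_\cM}=\id_{\fM(\cM)}$ implicit; your explicit check of these is routine but welcome.
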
 
\begin{proof} 
Indeed, let $G\in\Aut_\cA(\cM)$, then 
\begin{align*} 
	\ad_G&\De_R(f\cdot M)=(G\com\De_R\com G^{-1})(f\cdot M)
	          	 =(G\com\De_R)(f\cdot G^{-1}M) \\ 
	         &=G(Rf\cdot G^{-1}M)=Rf\cdot(G\com G^{-1}M) =Rf\cdot M, 
\end{align*} 
for all $\bR=(\De_R,R)\in\fM(\cM)$, $f\in\cA$, $M\in\cM$. Further,
\begin{align*} 
	\ad_G&(\bR'\com\bR'')
			=(G\com(\De_{R'}\com\De_{R''})\com G^{-1},R'\com R'') \\
		  &=((G\com\De_{R'}\com G^{-1})\com(G\com\De_{R''}
		  	\com G^{-1}),R'\com R'') =\ad_G\bR'\com\ad_G\bR'', 
\end{align*} 
for all $\bR'=(\De_{R'},R'),\bR''=(\De_{R''},R'')\in\fM(\cM)$. 
At last, 
\begin{align*}  
	\ad_G&\De_R(f\cdot M)=(G\com\De_R\com G^{-1})(f\cdot M) 
	           =f\cdot((G\com\De_R\com G^{-1})M \\
	         &=f\cdot\ad_G\De_R M,
\end{align*} 
for all $\bR=(\De_R,R)\in\fM(\cM)$, $f\in\cA$, $M\in\cM$. 
\end{proof} 

\begin{rem} 
The difference $\ad_G\De_R-\De_R\in\Hom_\cA(\cM;\ann_\cM\cA)$ 
for all  $G\in\Aut_\cA(\cM)$, $\bR=(\De_R,R)\in\fM(\cM)$. 
Indeed, in this case, $\pi(\ad_G\bR)=\pi(\bR)$, see Theorem \ref{T1}. 
\end{rem}

\begin{rem} 
The gauge transform of the $\cA$-algebra $\fM(\cM)$ defines an equivalence relation in $\fM(\cM)$. 
Namely, two multiplicators $\bR'=(\De_{R'},R'),\bR''=(\De_{R''},R'')\in\fM(\cM)$ are called
{\it equivalent}, $\bR'\sim_\fM\bR''$, if $\bR'=\ad_G\bR''$ for some $G\in\Aut_\cA(\cM)$. 
One can check that this relation is really an equivalence relation. 
\end{rem}

\begin{defi} 
Let $\cA$ be an associative commutative algebra, and let $\cM$ be an $\cA$-module. 
Every mapping $G\in\Aut_\cA(\cM)$ defines the {\it gauge transform} of the Lie $\cA$-algebra 
$\fD(\cM)$ by the rule 
\begin{equation*} 
	\bX=(\na_X,X)\mapsto\ad_G\bX=(\ad_G\na_X,X), 
	\quad \ad_G\na_X=G\com\na_X\com G^{-1}. 
\end{equation*}
\end{defi} 

\begin{theorem} 
Let $\cA$ be an associative commutative algebra, and let $\cM$ be an $\cA$-module. 
The gauge transform of the Lie $\cA$-algebra $\fD(\cM)$ defines the action 
\begin{equation*} 
	\ad : \Aut_\cA(\cM)\to\End_{\Lie\cap\cA}(\fD(\cM)), \quad G\mapsto \ad_G.
\end{equation*}
\end{theorem}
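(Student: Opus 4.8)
The plan is to follow the pattern of the preceding theorem for $\fM(\cM)$ and to establish everything by direct verification. For a fixed $G\in\Aut_\cA(\cM)$ there are three things to check: that $\ad_G\bX=(\ad_G\na_X,X)$ again lies in $\fD(\cM)$; that $\ad_G$ is a morphism of Lie algebras; and that $\ad_G$ is $\cA$-linear, so that indeed $\ad_G\in\End_{\Lie\cap\cA}(\fD(\cM))$. Afterwards I would verify that $G\mapsto\ad_G$ respects composition and units, which upgrades the assignment to an action.

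First I would confirm membership in $\fD(\cM)$, i.e.\ the Leibniz rule for $\ad_G\na_X=G\com\na_X\com G^{-1}$. Expanding $\ad_G\na_X(f\cdot M)$, using $G^{-1}(f\cdot M)=f\cdot G^{-1}M$ ($\cA$-linearity of $G^{-1}$), then the Leibniz rule for $\na_X$, and finally the $\cA$-linearity of $G$, one arrives at
\begin{equation*}
\ad_G\na_X(f\cdot M)=G\big(Xf\cdot G^{-1}M+f\cdot\na_X G^{-1}M\big)=Xf\cdot M+f\cdot\ad_G\na_X M .
\end{equation*}
Hence the pair $(\ad_G\na_X,X)$ satisfies the Leibniz rule, and since its second component is unchanged we also have $\Pi(\ad_G\bX)=X$, so $\ad_G\bX\in\fD(\cM)$ lies in the same fiber of $\Pi$ as $\bX$.

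For the Lie-morphism property I would use that conjugation commutes with the commutator. The bracket on $\fD(\cM)$ is component-wise, $[\bX,\bY]=([\na_X,\na_Y],[X,Y])$, and a one-line cancellation gives $[G\com\na_X\com G^{-1},\,G\com\na_Y\com G^{-1}]=G\com[\na_X,\na_Y]\com G^{-1}$; together with the fact that $\ad_G$ leaves the second component untouched, this yields $\ad_G[\bX,\bY]=[\ad_G\bX,\ad_G\bY]$. For $\cA$-linearity, the scalar action is $f\cdot\bX=(\ad_f\com\na_X,\ad_f\com X)$, and pulling $f$ across $G$ and $G^{-1}$ (again by $\cA$-linearity) shows $\ad_G(f\cdot\bX)=f\cdot\ad_G\bX$. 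Finally $\ad_{G'\com G''}=\ad_{G'}\com\ad_{G''}$ and $\ad_{\id_\cM}=\id$ follow from $(G'\com G'')^{-1}=G''^{-1}\com G'^{-1}$, so $\ad$ is the claimed action, taking values in invertible endomorphisms with $\ad_{G^{-1}}$ inverse to $\ad_G$.

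All of these are formal computations, essentially parallel to the proof for $\fM(\cM)$; the only step requiring genuine attention is the Leibniz verification, where one must check that the derivation term $Xf\cdot M$ is produced correctly. This is precisely where $G\in\Aut_\cA(\cM)$ — rather than a mere $\bbF$-linear bijection — is essential: the $\cA$-linearity of both $G$ and $G^{-1}$ is what allows the scalar $f$ to pass through the conjugation, so that $X$ acts only on $f$ while $\na_X$ acts on $M$. Everything else is routine.
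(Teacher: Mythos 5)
Your proposal is correct and follows essentially the same route as the paper's own proof: direct verification of the Leibniz rule for $\ad_G\na_X$ via the $\cA$-linearity of $G$ and $G^{-1}$, then the $\cA$-linearity of $\ad_G$, then compatibility with the component-wise commutator by conjugation. Your additional check that $G\mapsto\ad_G$ respects composition and units is a harmless (and welcome) supplement that the paper leaves implicit.
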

\begin{proof} 
Indeed, 
\begin{align*}
	&(\ad_G\na_X)(f\cdot M)=(G\com\na_X\com G^{-1})(f\cdot M) 
		=(G\com\na_X)(f\cdot G^{-1}M) \\
	&=G(\na_X(f\cdot G^{-1}M))
		=G(Xf\cdot G^{-1}M+f\cdot(\na_X\com G^{-1}M)) \\
	&=Xf\cdot(G\com G^{-1}M)+f\cdot(G\com\na_X\com G^{-1})M
		=Xf\cdot M+f\cdot(\ad_G\na_X)M, 
\end{align*}
hence, $\ad_G\bX\in\fD(\cM)$ for all $G\in\Aut_\cA(\cM)$  and $\bX=(\na_X,X)\in\fD(\cM)$. 
Further, 
\begin{equation*} 
	\ad_G(f\na_X)M=(G\com f\na_X\com G^{-1})M
		=f\cdot(G\com\na_X\com G^{-1})M=f\cdot\ad_GM
\end{equation*}
 for all $f\in\cA$, $\bX=(\na_X,X)\in\fD(\cM)$, where $f\bX=(f\na_X,fX)$, 
 hence $\ad_G\in\End_\cA(\fD(\cM))$ for all $G\in\Aut_\cA(\cM)$. 
 At last, let $G\in\Aut_\cA(\cM)$, $\bX'=(\na_{X'},X'),\bX''=(\na_{X''},X'')\in\fD(\cM)$, then 
 \begin{align*} 
 	&[\ad_G\bX',\ad_G\bX'']=([\ad_G\na_{X'},\ad_G\na_{X''}],[X',X'']) \\
 	&=([G\com\na_{X'}\com G^{-1},G\com\na_{X''}\com G^{-1}],[X',X'']) \\
 	&=(G\com[\na_{X'},\na_{X''}]\com G^{-1},[X',X''])=\ad_G[\bX',\bX''], 
 \end{align*}
 i.e., $\ad_G\in\End_{\Lie}(\fD(\cM))$.
\end{proof}

\begin{rem} 
The difference $\ad_G\na_X-\na_X\in\End_\cA(\cM)$ 
for all  $G\in\Aut_\cA(\cM)$, $\bX=(\na_X,X)\in\fD(\cM)$. 
Indeed, in this case, $\Pi(\ad_G\bX)=\Pi(\bX)$, see Theorem \ref{T2}. 
\end{rem}

\begin{rem} 
The gauge transform of the $\cA$-algebra $\fD(\cM)$ defines an equivalence relation in $\fD(\cM)$. 
Namely, two differentiations $\bX'=(\na_{X'},X'),\bX''=(\na_{X''},X'')\in\fD(\cM)$ are called
{\it equivalent}, $\bX'\sim_\fD\bX''$, if $\bX'=\ad_G\bX''$ for some $G\in\Aut_\cA(\cM)$. 
One can check that this relation is really an equivalence relation. 
\end{rem} 

\begin{prop} 
Let $G\in\Aut_\cA(\cM)$, $\bR\in\fM(\cM)$, $\bX\in\fD(\cM)$. Then 
\begin{equation*}
	\ad_G(\bR\com\bX)=\ad_G\bR\com\ad_G\bX \quad\text{and}\quad 
	\ad_G[\bX,\bR]=[\ad_G\bX,\ad_G\bR].
\end{equation*}
\end{prop}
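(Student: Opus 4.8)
The plan is to observe that both equalities are component-wise statements in which the gauge transform $\ad_G$ touches only the first (operator) component, acting there by conjugation with $G$, while leaving the second ($\fM(\cA)$- or $\fD(\cA)$-valued) component untouched; the operations $\bR\com\bX$ and $[\bX,\bR]$ are themselves defined component-wise. Thus each identity reduces to the elementary fact that conjugation by an invertible map $G$ is simultaneously an algebra endomorphism (it respects $\com$) and a Lie endomorphism (it respects $[\,\cdot\,,\cdot\,]$), together with the telescoping identity $G^{-1}\com G=\id_\cM$.

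For the first equality I would first unwind the left-hand side. Writing $\bR=(\De_R,R)$ and $\bX=(\na_X,X)$, the product $\bR\com\bX=(\De_R\com\na_X,R\com X)$ is an element of $\fD(\cM)$ (by the $\fM(\cM)$-action on $\fD(\cM)$), so the relevant gauge transform is the one on $\fD(\cM)$, giving $\ad_G(\bR\com\bX)=(G\com\De_R\com\na_X\com G^{-1},\,R\com X)$. On the right-hand side I would compose the two already-transformed differentiations $\ad_G\bR=(G\com\De_R\com G^{-1},R)$ and $\ad_G\bX=(G\com\na_X\com G^{-1},X)$; the first component is $(G\com\De_R\com G^{-1})\com(G\com\na_X\com G^{-1})=G\com\De_R\com\na_X\com G^{-1}$ after cancelling the inner $G^{-1}\com G$, while the second component is $R\com X$ by definition. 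The two sides then coincide.

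For the second equality the argument is identical in spirit. Here $[\bX,\bR]=([\na_X,\De_R],[X,R])$ lies in $\fM(\cM)$ (by the $\fD(\cM)$-action on $\fM(\cM)$), so I apply the gauge transform on $\fM(\cM)$ and obtain $\ad_G[\bX,\bR]=(G\com[\na_X,\De_R]\com G^{-1},[X,R])$. On the other side, the first component of $[\ad_G\bX,\ad_G\bR]$ is the commutator of the conjugates $[G\com\na_X\com G^{-1},G\com\De_R\com G^{-1}]$, which collapses to $G\com[\na_X,\De_R]\com G^{-1}$ once the inner $G^{-1}\com G$ factors cancel; the second component is again $[X,R]$. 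Hence both sides agree.

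The computation itself is routine, so the only real care required is bookkeeping of types: one must recognize that $\bR\com\bX$ is a \emph{differentiation} (hence transformed by the $\fD(\cM)$-gauge rule) whereas $[\bX,\bR]$ is a \emph{multiplicator} (hence transformed by the $\fM(\cM)$-gauge rule), and that in each case $\ad_G$ acts as conjugation on the first component and as the identity on the second. Once these identifications are made correctly, the two identities follow from the cancellation $G^{-1}\com G=\id_\cM$ alone.
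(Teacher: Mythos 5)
Your proof is correct. The paper states this proposition without any proof at all, and your argument --- unwinding $\bR\com\bX=(\De_R\com\na_X,R\com X)$ and $[\bX,\bR]=([\na_X,\De_R],[X,R])$ component-wise, applying the appropriate gauge rule ($\fD(\cM)$-type for the first, $\fM(\cM)$-type for the second), and cancelling the inner $G^{-1}\com G$ --- is exactly the routine direct verification the author clearly intends, in the same style as the computations given for the two preceding gauge-transform theorems. Your explicit attention to which of the two gauge transforms applies to each composite object is the one genuinely non-trivial bookkeeping point, and you handle it correctly.
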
 

\section{De Rham cohomology.} 
Let $\cA$ be an associative commutative algebra, let $\cK$, $\cM$ be $\cA$-modules, 
and let $\cU=\cA,\cK$, $\cV=\cA,\cM$. 

\begin{defi} 
The $\cA$-module $\Om(\cU,\cV)=\oplus_{q\in\bbZ}\Om^q(\cU,\cV)$ of {\it differential forms 
over the $\cU$ with coefficients in the $\cV$} is defined by the rule 
\begin{equation*} 
	\Om^q(\cU,\cV)=\begin{cases}
		0, & q<0, \\
		\cV, & q=0, \\
		\Hom_\cA(\w^q_\cA\fD(\cU);\cV), & q>0.
					  \end{cases}
\end{equation*} 
\end{defi} 
In particular, the set $\Om(\cU,\cA)$ has the natural structure 
of an exterior $\cA$-algebra, 
while the set $\Om(\cU,\cM)$ has the natural structure 
of an exterior $\Om(\cU,\cA)$-module. 

\begin{defi}
For every $\xi\in\fD(\cU)$ the {\it interior product} 
$i_\xi\in\End_\cA(\Om(\cU,\cV))$ is defined by the contraction rule 
\begin{equation*} 
	(i_\xi\om)(\xi_1,\dots,\xi_{q-1})=q\om(\xi,\xi_1,\dots,\xi_{q-1})
\end{equation*}
for all $q\in\bbZ$, $\om\in\Om^q(\cU,\cV)$, $\xi_1,\dots,\xi_{q-1}\in\fD(\cU)$. 
\end{defi} 

\begin{prop} 
The following statements hold: 
\begin{itemize} 
	\item 
		$i_{\xi'}\com i_{\xi''}+i_{\xi''}\com i_{\xi'}=0$ 
		for all $\xi',\xi''\in\fD(\cU)$; 
	\item 
		$i_\xi(\phi\w\om)=(i_\xi\phi)\w\om+(-1)^q\phi\w(i_\xi\om)$ 
		for all $\xi\in\fD(\cU)$, $\phi\in\Om^q(\cU,\cA)$ and 
		$\om\in\Om(\cU,\cV)$, 
		for all $\phi\in\Om^q(\cU,\cA)$, i.e., 
		the mapping $i_\xi$ is an exterior differentiation 
		of the exterior algebra $\Om(\cU,\cA)$ 
		and the exterior $\Om(\cU,\cA)$-module $\Om(\cU,\cM)$. 
\end{itemize}
\end{prop}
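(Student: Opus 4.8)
The first identity is the quickest, and it follows directly from the defining formula for $i_\xi$ together with the fact that a $q$-form, being an $\cA$-linear map out of $\w^q_\cA\fD(\cU)$, is alternating in its arguments. For $\om\in\Om^q(\cU,\cV)$ with $q\ge 2$ I would apply the definition twice: since $i_{\xi''}\om\in\Om^{q-1}(\cU,\cV)$, one gets $(i_{\xi'}\com i_{\xi''}\om)(\xi_1,\dots,\xi_{q-2})=(q-1)(i_{\xi''}\om)(\xi',\xi_1,\dots,\xi_{q-2})=q(q-1)\,\om(\xi'',\xi',\xi_1,\dots,\xi_{q-2})$, and symmetrically $(i_{\xi''}\com i_{\xi'}\om)(\xi_1,\dots,\xi_{q-2})=q(q-1)\,\om(\xi',\xi'',\xi_1,\dots,\xi_{q-2})$. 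Adding the two and using $\om(\xi'',\xi',\dots)=-\om(\xi',\xi'',\dots)$ gives zero; the cases $q\le 1$ are trivial since the target degree is negative. Note that the common scalar $q(q-1)$ is produced precisely by the degree factors in the definition of $i_\xi$, so the normalization is irrelevant here.

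For the graded Leibniz identity I would first make the exterior multiplication explicit. To be compatible with the degree factor $q$ in the definition of $i_\xi$, the exterior-algebra structure on $\Om(\cU,\cA)$ (and the $\Om(\cU,\cA)$-module structure on $\Om(\cU,\cM)$) must be the totally antisymmetrized product $(\phi\w\om)(\xi_1,\dots,\xi_{p+q})=\frac{1}{(p+q)!}\sum_\sigma\sign(\sigma)\,\phi(\xi_{\sigma(1)},\dots,\xi_{\sigma(q)})\,\om(\xi_{\sigma(q+1)},\dots,\xi_{\sigma(p+q)})$ for $\phi\in\Om^q(\cU,\cA)$, $\om\in\Om^p(\cU,\cV)$, the sum running over $\sigma$ in the symmetric group on $p+q$ letters. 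The computation then proceeds by inserting the extra argument $\xi$ into the first slot, using $i_\xi(\phi\w\om)(\xi_1,\dots,\xi_{p+q-1})=(p+q)(\phi\w\om)(\xi,\xi_1,\dots,\xi_{p+q-1})$, and splitting the permutation sum according to whether the entry $\xi$ is sent into the $q$ arguments of $\phi$ or into the $p$ arguments of $\om$.

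In the first group of terms I would move $\xi$ to the front of the $\phi$-block, collecting a sign, and recognize the resulting partial sum as $(i_\xi\phi)\w\om$; in the second group I would move $\xi$ past the $q$ arguments of $\phi$, producing the sign $(-1)^q$, to the front of the $\om$-block and recognize $(-1)^q\phi\w(i_\xi\om)$. The crux, and the only place real care is needed, is the bookkeeping of the combinatorial prefactors: the global degree factor $p+q$ together with $\frac{1}{(p+q)!}$ must redistribute into the factor $q$ carried by $i_\xi\phi$ over its block and the factor $p$ carried by $i_\xi\om$ over its block, and it is exactly here that the antisymmetrization normalization is forced, an incompatible choice of $\w$ failing to balance. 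A slicker alternative, useful as a cross-check, is to note that the degreewise isomorphism $T\colon\Om^q\to\Om^q$, $\om\mapsto\om/q!$, conjugates the classical factorless contraction $\iota_\xi$ into $i_\xi$ and simultaneously carries the ordinary shuffle product into $\w$; since $\iota_\xi$ is a graded derivation of the shuffle product with sign $(-1)^q$, this derivation property transports verbatim to $i_\xi$ and $\w$, yielding the stated identity with no further computation.
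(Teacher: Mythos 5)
Your argument is correct, and it is exactly the ``direct verification'' that the paper leaves unwritten: this proposition is stated without proof, and your computation (anticommutativity from the alternating property of $\om$, the Leibniz identity by splitting the antisymmetrization sum according to where $\xi$ lands) is the standard route. The one substantive point you add --- that the degree factor $q$ in the definition of $i_\xi$ forces the $\tfrac{1}{(p+q)!}$-normalized (Kobayashi--Nomizu style) exterior product, which the paper never writes down --- is right and is consistent with the factor $\tfrac{1}{q+1}$ appearing in the paper's Cartan formula for $d$; your conjugation cross-check via $T\colon\om\mapsto\om/q!$ is also valid.
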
 

\begin{defi}
Let a mapping $\vk\in\Om^1(\cU,\fD(\cV))=\Hom_\cA(\fD(\cU);\fD(\cV))$ 
be fixed. For every $\xi\in\fD(\cU)$ the {\it Lie derivative} 
$L_\xi\in\End_\bbF(\cU,\cV)$ is defined by the rule 
\begin{equation*} 
	(L_\xi\om)(\xi_1,\dots,\xi_q)=(\vk\xi)(\om(\xi_1,\dots,\xi_q))
	-\sum_{1\le r\le q}\om(\xi_1,\dots[\xi,\xi_r]\dots,\xi_q)
\end{equation*} 
for all $q\in\bbZ$, $\om\in\Om^q(\cU,\cV)$, $\xi_1,\dots,\xi_q\in\fD(\cU)$. 
\end{defi}

\begin{prop} 
The following statements hold: 
\begin{itemize} 
	\item 
		$L_\xi(\phi\w\om)=(L_\xi\phi)\w\om+\phi\w(L_\xi\om)$ 
		for all $\phi\in\Om^q(\cU,\cA)$, 
		$q\in\bbZ$, and $\om\in\Om(\cU,\cV)$, i.e., 
		the mapping $L_\xi$ is a differentiation 
		of the exterior algebra $\Om(\cU,\cA)$ 
		and the exterior $\Om(\cU,\cA)$-module $\Om(\cU,\cM)$; 
	\item 
		$[L_{\xi'},i_{\xi''}]=i_{[\xi',\xi'']}$  for all $\xi',\xi''\in\fD(\cU)$; 
	\item 
		if the residual $F(\vk)=0$ then 
		$[L_{\xi'},L_{\xi''}]=L_{[\xi',\xi'']}$ for all $\xi',\xi''\in\fD(\cU)$, 
		where 
		$F(\vk)\in \Om^2(\cU,\fD(\cV))=\Hom_\cA(\w^2_\cA\fD(\cU);\fD(\cV))$, 
		\begin{equation*} 
			F(\vk)(\xi',\xi'')=[\vk\xi',\vk\xi'']-\vk[\xi',\xi''] 
			\quad\text{for all}\quad \xi',\xi''\in\fD(\cU).
		 \end{equation*} 
\end{itemize}
\end{prop}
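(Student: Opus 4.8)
The plan is to verify all three identities by direct evaluation on arguments $\xi_1,\dots,\xi_q\in\fD(\cU)$, using the defining formula for $L_\xi$ together with the fact that $\vk\xi\in\fD(\cV)$ is a differentiation of the module $\cV$. It is convenient to split the defining formula as $L_\xi=A_\xi-B_\xi$, where $(A_\xi\om)(\xi_1,\dots,\xi_q)=(\vk\xi)\bigl(\om(\xi_1,\dots,\xi_q)\bigr)$ is the coefficient part and $(B_\xi\om)(\xi_1,\dots,\xi_q)=\sum_{1\le r\le q}\om(\xi_1,\dots[\xi,\xi_r]\dots,\xi_q)$ is the substitution part. Neither piece preserves $\cA$-multilinearity separately, since $\vk\xi$ is not $\cA$-linear, so this splitting is only a bookkeeping device at the level of values; the full combination $L_\xi$ does land in $\Om^q(\cU,\cV)$, and every manipulation below is carried out on fixed arguments, where all operations are merely $\bbF$-linear.

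For the first item I would evaluate $L_\xi(\phi\w\om)$ on arguments. The coefficient part $A_\xi$ applies the differentiation $\vk\xi$ to each product $\phi(\cdots)\cdot\om(\cdots)$ occurring in the shuffle expansion of $\phi\w\om$; by the Leibniz rule for $\vk\xi$ in the $\cA$-module $\cV$ this produces exactly $(A_\xi\phi)\w\om+\phi\w(A_\xi\om)$. The substitution part $B_\xi$ replaces one argument by a bracket, and since each index $r$ falls into exactly one of the two shuffle blocks, summing over $r$ gives $(B_\xi\phi)\w\om+\phi\w(B_\xi\om)$; subtracting yields the stated Leibniz rule (degree zero, hence no sign). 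For the second item I would compute $(L_{\xi'}i_{\xi''}\om-i_{\xi''}L_{\xi'}\om)(\xi_1,\dots,\xi_{q-1})$ directly: the two coefficient contributions $(\vk\xi')\bigl(q\,\om(\xi'',\dots)\bigr)$ cancel by $\bbF$-linearity, the substitution sums over the slots $\xi_1,\dots,\xi_{q-1}$ cancel, and the only surviving term is the one in which $i_{\xi''}L_{\xi'}$ substitutes into the slot occupied by $\xi''$, namely $q\,\om([\xi',\xi''],\xi_1,\dots,\xi_{q-1})=(i_{[\xi',\xi'']}\om)(\xi_1,\dots,\xi_{q-1})$.

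The third item is the substantive one. Here I would first observe that the mixed commutators vanish, $[A_{\xi'},B_{\xi''}]=[A_{\xi''},B_{\xi'}]=0$, because $A$ transforms the value while $B$ reshuffles the arguments, and these act on independent data; consequently
\begin{equation*}
[L_{\xi'},L_{\xi''}]=[A_{\xi'},A_{\xi''}]+[B_{\xi'},B_{\xi''}].
\end{equation*}
On values, $[A_{\xi'},A_{\xi''}]$ contributes $[\vk\xi',\vk\xi'']\bigl(\om(\cdots)\bigr)$, the commutator of the two differentiations acting on the coefficient; the hypothesis $F(\vk)=0$ converts this into $\bigl(\vk[\xi',\xi'']\bigr)\bigl(\om(\cdots)\bigr)=A_{[\xi',\xi'']}\om$. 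In $[B_{\xi'},B_{\xi''}]$ the terms with two distinct substituted slots $r\ne s$ occur symmetrically in both orders and cancel, while the diagonal terms $r=s$ leave, in slot $r$, the argument $[\xi'',[\xi',\xi_r]]-[\xi',[\xi'',\xi_r]]$, which by the Jacobi identity in $\fD(\cU)$ equals $-[[\xi',\xi''],\xi_r]$; hence $[B_{\xi'},B_{\xi''}]=-B_{[\xi',\xi'']}$. Adding the two contributions gives $[L_{\xi'},L_{\xi''}]=A_{[\xi',\xi'']}-B_{[\xi',\xi'']}=L_{[\xi',\xi'']}$.

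The main obstacle is the bookkeeping in $[B_{\xi'},B_{\xi''}]$: one must separate the double substitutions into the off-diagonal pairs, which must be checked to cancel after the relabeling $r\leftrightarrow s$, and the diagonal iterated brackets, which the Jacobi identity collapses, keeping track of signs so that the diagonal argument is exactly $-[[\xi',\xi''],\xi_r]$. The only place the hypothesis enters is the value-level identity $[\vk\xi',\vk\xi'']=\vk[\xi',\xi'']$, i.e.\ $F(\vk)=0$; without it the coefficient part of $[L_{\xi'},L_{\xi''}]$ would differ from that of $L_{[\xi',\xi'']}$ precisely by the anomaly $F(\vk)(\xi',\xi'')$ acting on $\om(\cdots)$.
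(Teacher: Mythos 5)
Your proof is correct: the paper states this proposition without proof (its neighbouring results are handled ``by direct verification''), and your direct evaluation on arguments---with the splitting $L_\xi=A_\xi-B_\xi$ into coefficient and substitution parts, the cancellation of the mixed commutators, $F(\vk)=0$ handling $[A_{\xi'},A_{\xi''}]$, and the Jacobi identity collapsing the diagonal terms of $[B_{\xi'},B_{\xi''}]$ to $-B_{[\xi',\xi'']}$---is exactly the intended verification. All signs and the bookkeeping check out.
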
 

 \begin{defi} 
The endomorphism $d=d_\vk\in\End_\bbF(\Om(\cU,\cV))$
is defined by the {\rm Cartan formula}, 
\begin{align*}  
	d\om(\xi_0,\dots,\xi_q)
		&=\frac1{q+1}\bigg\{\sum_{0\le r\le q}(-1)^r
		\vk\xi_r\big(\om(\xi_0,\dots\ck{\xi}_r\dots,\xi_q)\big) \\
		&+\sum_{0\le r<s\le q}\!\!(-1)^{r+s}
		\om([\xi_r,\xi_s],\xi_0,\dots\ck{\xi}_r\dots\ck{\xi}_s\dots,\xi_q)	
		                      \bigg\},
\end{align*}
for all $\om\in\Om^q(\cU,\cV)$, $\xi_0,\dots,\xi_q\in\fD(\cU)$, 
the ``checked'' argument is understood to be omitted, thus  
$d^q=d\big|_{\Om^q(\cU,\cV)} : \Om^q(\cU,\cV)\to\Om^{q+1}(\cU,\cV)$. 
\end{defi}

\begin{theorem}\label{T5} 
The following statements hold: 
\begin{itemize} 
	\item 
		$d(\phi\w\om)=d\phi\w\om+(-1)^q\phi\w d\om$ 
		for all $\phi\in\Om^q(\cU,\cA)$, 
		$q\in\bbZ$, and $\om\in\Om(\cU,\cV)$, i.e., 
		the mapping $d$ is an exterior differentiation 
		of the exterior algebra $\Om(\cU,\cA)$ 
		and the exterior $\Om(\cU,\cA)$-module $\Om(\cU,\cM)$; 
	\item  
		if the residual $F(\vk)=0$ then 
		the composition $d\com d=0$, and 
		the differential complex $\{\Om^q(\cK,\cM),d^q\mid q\in\bbZ\}$ 
		is defined with the cohomology spaces 
		$H^q(\cK,\cM)=\ke d^q\big/\im d^{q-1}$, $q\in\bbZ$. 
\end{itemize}
\end{theorem}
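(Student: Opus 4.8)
\section*{Proof plan for Theorem \ref{T5}}

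The plan is to treat the two assertions in turn: first the graded Leibniz (anti\-derivation) rule for $d$, and then the nilpotency $d\com d=0$ under the hypothesis $F(\vk)=0$. Both are the algebraic analogues of the classical Chevalley--Eilenberg / de Rham computations, so the route is direct verification from the Cartan formula, organized so that the normalization constants (the factor $\tfrac1{q+1}$ in $d$ and the factor $q$ in $i_\xi$) are pinned down once on low-degree forms. The central organizing device, which I would establish first, is the homotopy identity $i_\xi\com d+d\com i_\xi=L_\xi$ on $\Om(\cU,\cV)$: evaluating both sides on $\xi_1,\dots,\xi_q$ and using the contraction rule shows this holds for arbitrary $\vk$, with no flatness assumption (one checks it directly in degree $1$ and the normalizations propagate). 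A second recurring remark is that, since $\bbF\in\{\bbR,\bbC\}$, the integer $q$ is invertible, so the identity $(i_\xi\om)(\xi_1,\dots,\xi_{q-1})=q\,\om(\xi,\xi_1,\dots,\xi_{q-1})$ means that a form of positive degree vanishes as soon as all its contractions $i_\xi$ vanish.

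For the anti\-derivation rule I would argue by induction on the total degree $p+q$, where $\phi\in\Om^p(\cU,\cA)$ and $\om\in\Om^q(\cU,\cV)$. The base case $p=0$ (so $\phi\in\cA$) is a short direct computation: the $\vk\xi_r$-terms of the Cartan formula redistribute over the product $\phi\cdot\om(\dots)$ by the Leibniz property of $\vk\xi_r\in\fD(\cV)$. For the inductive step I would compare the $i_\xi$-contractions of the two sides of the Leibniz identity. Applying $i_\xi\com d=L_\xi-d\com i_\xi$ to $\phi\w\om$, to $\phi$, and to $\om$, and using that $i_\xi$ is an anti\-derivation and $L_\xi$ a derivation of $\Om(\cU,\cA)$ and of the module $\Om(\cU,\cM)$ (both already recorded), every remaining $d$ is applied to a product of strictly smaller total degree, namely $i_\xi\phi\w\om$ and $\phi\w i_\xi\om$; the inductive hypothesis then collapses both contracted sides to the same expression. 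Since the contractions agree for every $\xi$, the forms agree.

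For $d\com d=0$ I would expand $d(d\om)(\xi_0,\dots,\xi_{q+1})$ in full. The terms fall into three groups: the \emph{double-derivative} terms $\vk\xi_r\big(\vk\xi_s(\om(\dots))\big)$; the \emph{mixed} terms $\vk\xi_r\big(\om([\xi_s,\xi_t],\dots)\big)$ together with $\vk[\xi_r,\xi_s]\big(\om(\dots)\big)$; and the \emph{double-bracket} terms $\om\big([[\xi_r,\xi_s],\xi_t],\dots\big)$. The plan is to show, after reindexing, that each unordered pair $\{r,s\}$ contributes exactly the combination $\big([\vk\xi_r,\vk\xi_s]-\vk[\xi_r,\xi_s]\big)$ acting on the appropriate value of $\om$, which is precisely $F(\vk)(\xi_r,\xi_s)$ and therefore vanishes by hypothesis; and that the double-bracket terms cancel in triples by the Jacobi identity of the Lie algebra $\fD(\cU)$. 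The base case is the one-line identity $d(d\,g)(\xi_0,\xi_1)=\tfrac12\big(F(\vk)(\xi_0,\xi_1)\big)g$ for $g\in\cA$, which already exhibits the role of $F(\vk)=0$.

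The hard part will be the sign-and-index bookkeeping in the $d\com d$ expansion: matching the double-derivative terms with the correct mixed terms so that the curvature $F(\vk)$ emerges with uniform sign, and confirming that the double-bracket terms assemble into Jacobi triples rather than leaving a residue. Once the normalization constants are fixed on degrees $0$ and $1$, this reorganization is mechanical, and the only genuine inputs are the Jacobi identity in $\fD(\cU)$ and the vanishing of the residual $F(\vk)$; I note that the homotopy identity could alternatively be used to reduce the inductive step, but it cannot bypass the curvature computation, since $[L_\xi,d]$ and $[i_\xi,d\com d]$ coincide and hence both vanish only when $F(\vk)=0$.
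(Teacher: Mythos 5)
Your plan is correct, and it supplies substantially more detail than the paper itself, whose proof of this theorem consists of the remark that $d\com d=0$ follows from the assumed identity $\vk[\xi',\xi'']=[\vk\xi',\vk\xi'']$ (with the computation delegated to reference [1]) and that the Leibniz rule is a ``simple verification.'' Your treatment of $d\com d=0$ --- expanding, pairing the double-derivative terms with the mixed terms so that each unordered pair $\{r,s\}$ produces $F(\vk)(\xi_r,\xi_s)$, and cancelling the double-bracket terms by Jacobi --- is exactly the route the paper declares. Where you genuinely depart is the first bullet: instead of brute-force verification you first establish the homotopy identity $L_\xi=i_\xi\com d+d\com i_\xi$ for \emph{arbitrary} $\vk$ and then run an induction on total degree by comparing $i_\xi$-contractions, using that a form of positive degree vanishes once all its contractions do. This is a cleaner organization, and I checked that its one delicate ingredient is sound: the homotopy identity really does hold without any flatness hypothesis (the cross terms cancel by antisymmetry alone), even though the paper states its Cartan magic formula only under $F(\vk)=0$; since the Leibniz rule in the theorem carries no flatness hypothesis, your proof needs this sharper version, and it is available. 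The only caveat is organizational: your ``base case $p=0$'' should be phrased as the base of an induction on $p$ (or one should take total degree $0$ as the base of an induction on $p+q$), since contracting $\phi\w\om$ lowers either factor's degree; and, as you note, the wedge and contraction normalizations must be fixed consistently before the bookkeeping in the $d\com d$ expansion is attempted. Neither point is a gap.
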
 
\begin{proof} 
The proof of the composition $d\com d=0$ based on the assumed equality
$\vk[\xi',\xi'']=[\vk\xi',\vk\xi'']$, $\xi',\xi''\in\fD(\cU)$, of the mapping $\vk$, 
see \cite{Z1} for the detailed exposition. 
The other statements are done by simple verifications. 
\end{proof} 

\begin{theorem} 
Let the residual $F(\vk)=0$ then the {\rm Cartan (magic) formula} 
\begin{equation*} 
	L_\xi=d\com i_\xi+i_\xi\com d
\end{equation*}
holds for any $\xi\in\fD(\cU)$. 
\end{theorem}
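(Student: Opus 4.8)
\emph{Plan of proof.} The asserted formula is an identity between two $\bbF$-linear endomorphisms of $\Om(\cU,\cV)$, so it suffices to fix $q\in\bbZ$, take $\om\in\Om^q(\cU,\cV)$ and arbitrary $\xi_1,\dots,\xi_q\in\fD(\cU)$, and verify that $(L_\xi\om)(\xi_1,\dots,\xi_q)$ coincides with $\big((d\com i_\xi+i_\xi\com d)\om\big)(\xi_1,\dots,\xi_q)$. I would carry this out by expanding the two composite terms separately through the Cartan formula defining $d$ and the contraction rule defining $i_\xi$, and then matching the result against the defining formula for $L_\xi$.

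First I would treat $i_\xi\com d$. Since $d\om\in\Om^{q+1}(\cU,\cV)$, the contraction rule gives $(i_\xi d\om)(\xi_1,\dots,\xi_q)=(q+1)(d\om)(\xi,\xi_1,\dots,\xi_q)$, so the factor $q+1$ cancels the normalisation $1/(q+1)$ in the Cartan formula, leaving a clean sum of $\vk\xi_r$-type terms and $[\xi_r,\xi_s]$-type terms with $\xi$ inserted as the zeroth argument. Next I would treat $d\com i_\xi$: here $i_\xi\om\in\Om^{q-1}(\cU,\cV)$, the contraction contributes a factor $q$ that cancels the $1/q$ of the Cartan formula, and every evaluation of $i_\xi\om$ reintroduces $\xi$ as the first argument of $\om$. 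The key bookkeeping is the index shift caused by $i_\xi$ lowering the degree, which converts $(-1)^r$ into $(-1)^{r-1}$ in the surviving derivative terms.

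The cancellations then proceed as follows. The $\vk\xi_r$-terms ($1\le r\le q$) coming from $i_\xi d\om$ carry sign $(-1)^r$, while those coming from $d\,i_\xi\om$ carry sign $(-1)^{r-1}$, so they annihilate in pairs. Likewise, the double-bracket terms $\om([\xi_r,\xi_s],\dots)$ with $1\le r<s\le q$ appear in both expansions; after using that $\om$ is alternating to transpose $\xi$ past $[\xi_r,\xi_s]$ (producing one sign $-1$) these two families cancel. What survives is exactly $\vk\xi\big(\om(\xi_1,\dots,\xi_q)\big)$ together with the terms $(-1)^s\om([\xi,\xi_s],\xi_1,\dots\ck{\xi}_s\dots,\xi_q)$; reordering $[\xi,\xi_s]$ into the $s$-th slot (sign $(-1)^{s-1}$) turns the latter into $-\sum_s\om(\xi_1,\dots,[\xi,\xi_s],\dots,\xi_q)$, and the total reassembles precisely into $(L_\xi\om)(\xi_1,\dots,\xi_q)$.

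I expect the main obstacle to be purely clerical: keeping the two normalisation factors ($q$ and $q+1$), the sign shifts induced by $i_\xi$, and the repositioning of the inserted argument $\xi$ all consistent, so that the two bracket families and the two derivative families cancel without residue. A structural alternative would be to observe that $d\com i_\xi+i_\xi\com d$ is the anticommutator of the two odd exterior differentiations $d$ and $i_\xi$, hence a degree-$0$ differentiation of $\Om(\cU,\cA)$ and of $\Om(\cU,\cM)$, and to compare it with $L_\xi$ only on $\Om^0$ and $\Om^1$; I would avoid this route here, since it presupposes that $\Om(\cU,\cA)$ is generated in degrees $0$ and $1$, which is not evident in the present abstract setting. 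One point I would flag while writing out the computation is that the hypothesis $F(\vk)=0$ does not seem to enter any of these cancellations --- no composition $\vk\xi'\com\vk\xi''$ ever arises --- so the magic formula appears to follow from the definitions alone, the flatness being merely the ambient condition under which $d$, $i_\xi$ and $L_\xi$ live together on the de Rham complex.
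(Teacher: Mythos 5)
Your plan is the standard direct verification that the paper itself invokes (its ``proof'' simply declares the calculation standard and defers it to \cite{Z1}), and the bookkeeping you describe --- the factor $q+1$ cancelling the Cartan normalisation in $i_\xi\com d$, the factor $q$ doing the same in $d\com i_\xi$, the pairwise cancellation of the $\vk\xi_r$-terms via the shift $(-1)^r\mapsto(-1)^{r-1}$, the cancellation of the two double-bracket families after one transposition of $\xi$ past $[\xi_r,\xi_s]$, and the sign $(-1)^{s-1}$ from moving $[\xi,\xi_s]$ into the $s$-th slot --- is exactly right and reassembles $(L_\xi\om)(\xi_1,\dots,\xi_q)$ without residue. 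Your side remark is also correct and worth keeping: since $i_\xi$ involves no $\vk$ and $d$ applies it only once, no composition $\vk\xi'\com\vk\xi''$ ever arises, so the magic formula holds without the hypothesis $F(\vk)=0$, which is genuinely needed only for $d\com d=0$ and for $[L_{\xi'},L_{\xi''}]=L_{[\xi',\xi'']}$.
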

\begin{proof} 
The proof is standard, but calculations are rather tiresome. 
See \cite{Z1} for full details. 
\end{proof}
\begin{cor} 
The commutator $[L_\xi,d]=0$ for any $\xi\in\fD(\cU)$. 
\end{cor}

\begin{theorem} 
Let $\cM$ be an $\cA$-module, let $\Om(\cM)=\Om(\cM,\cM)$, 
and let us take $\vk=\id_{\fD(\cM)}\in\End_{\cA}(\fD(\cM))$. 
Then the complex $\{\Om^q(\cM),d^q\mid q\in\bbZ\}$ is exact, 
i.e., the cohomology spaces $H^q(\cM)=H^q(\cM,\cM)=0$ for all $q\in\bbZ$. 
\end{theorem}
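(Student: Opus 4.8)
The plan is to produce an explicit contracting homotopy for the complex through the Cartan (magic) formula, taking as the contracting ``vector field'' the tautological differentiation $\bI=\I(\id_\cM)=(\id_\cM,0)\in\fD(\cM)$ furnished by the injection $\I:\gl_\cA(\cM)\to\fD(\cM)$ of Proposition~\ref{P13}. First I would record that the choice $\vk=\id_{\fD(\cM)}$ makes the residual vanish identically, since $F(\vk)(\xi',\xi'')=[\xi',\xi'']-[\xi',\xi'']=0$; by Theorem~\ref{T5} this guarantees $d\com d=0$, so the complex $\{\Om^q(\cM),d^q\}$ is genuinely defined, and it is also precisely the hypothesis under which the Cartan formula $L_\xi=d\com i_\xi+i_\xi\com d$ is available for every $\xi\in\fD(\cM)$.

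The core computation is to show $L_{\bI}=\id$ on all of $\Om(\cM)$. Because $\vk=\id$, the coefficient term $(\vk\bI)(\om(\xi_1,\dots,\xi_q))$ acts through the $\na$-component of $\bI$, which is $\id_\cM$, and therefore returns $\om(\xi_1,\dots,\xi_q)$ untouched. For the remaining terms, the component-wise Lie bracket in $\fD(\cM)$ gives $[\bI,\xi_r]=([\id_\cM,\na_{X_r}],[0,X_r])=(0,0)=0$ for every $\xi_r=(\na_{X_r},X_r)$, since $\id_\cM$ is central in $\End_\bbF(\cM)$ and the base component of $\bI$ vanishes. Substituting into the definition of the Lie derivative yields $(L_{\bI}\om)(\xi_1,\dots,\xi_q)=\om(\xi_1,\dots,\xi_q)$ for $q>0$, while in degree zero $L_{\bI}M=\id_\cM M=M$; hence $L_{\bI}=\id_{\Om(\cM)}$.

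Feeding $L_{\bI}=\id$ into the Cartan formula then gives $\id_{\Om(\cM)}=d\com i_{\bI}+i_{\bI}\com d$, so $h=i_{\bI}$ is a contracting homotopy and exactness is immediate: a closed $\om\in\Om^q(\cM)$ with $q\ge1$ satisfies $\om=d(i_{\bI}\om)+i_{\bI}(d\om)=d(i_{\bI}\om)$, whence $H^q(\cM)=0$, and in degree zero $i_{\bI}$ lands in $\Om^{-1}=0$, so a closed $M$ obeys $M=d(i_{\bI}M)+i_{\bI}(dM)=0$ and $H^0(\cM)=0$. The one step deserving care is the identity $L_{\bI}=\id$, in particular the vanishing $[\bI,\xi_r]=0$ and the correct reading of how $\vk\bI\in\fD(\cM)$ acts on a coefficient in $\cM$; granting these, the passage from the homotopy to exactness is entirely routine.
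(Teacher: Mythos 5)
Your proposal is correct and follows exactly the paper's argument: the paper likewise takes the pair $(\id_\cM,0)\in\fD(\cM)$ from Proposition~\ref{P13}, observes that its Lie derivative is the identity on $\Om(\cM)$, and concludes via the Cartan magic formula that $i_{(\id_\cM,0)}$ is a contracting homotopy. You simply supply the verifications (vanishing of $F(\vk)$, the bracket computation $[\bI,\xi_r]=0$, the degree-zero case) that the paper leaves implicit.
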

\begin{proof} 
Indeed, by Proposition \ref{P13}, the pair $\bE=(\id_\cM,0)\in\fD(\cM)$, 
while $L_\bE=\id_{\Om(\cM)}$. 
Hence, by the Cartan magic formula, the homotopy formula 
\begin{equation*} 
	\id_{\Om(\cM)}=i_\bE\com d+d\com i_\bE
\end{equation*}
holds, implying the claim.
\end{proof}

\newpage

\end{document}